\newdimen\plusheight
\def\+{\;\lower\plusheight\hbox{$+$}\;}
\newdimen\minusheight
\def\-{\;\lower\minusheight\hbox{$-$}\;}
\newdimen\cdotsheight
\def\cds{\lower\cdotsheight\hbox{$\cdots$}}
\numberwithin{equation}{section}
\theoremstyle{plain}
\newtheorem{theorem}{Theorem}[section]
\newtheorem{lemma}{Lemma}[section]
\newtheorem{corollary}{Corollary}[section]
\newtheorem{definition}{Definition}[section]
\newtheorem{remark}{Remark}[section]
\newtheorem{note}{Note}[section]
\newtheorem{example}{Example}[section]
\newenvironment{nouppercase}{%
\renewcommand{\uppercasenonmath}[1]{}}{}
\newcommand{\Keywords}[1]{\par\noindent{\small{Keywords and phrases}: #1}}
\newcommand{\AMS}[1]{\par\noindent{\small{AMS Subject Classification (2010)}: #1}}
\begin{document}

\title{NEW SEPARATION AXIOMS IN GENERALIZED BITOPOLOGICAL SPACES}

 \author{Amar Kumar Banerjee$^1$}
 \author{Jagannath Pal$^2$}
 \newcommand{\acr}{\newline\indent}
\address{{1} Department of Mathematics, The University of Burdwan, 
 West Bengal, India.
 Email:
akbanerjee@math.buruniv.ac.in, akbanerjee1971@gmail.com\acr
 {2} Department of Mathematics, The University of Burdwan, 
 West Bengal, India. 
 Email: jpalbu1950@gmail.com\\}

 \maketitle
 \begin{abstract}          
 Here we have studied on the ideas of $g_{\mu_i}$ and $\lambda_{\mu_i}$-closed sets  with respect to  ${\mu_j}(i,j=1,2,i\not=j)$ and pairwise $ \lambda $-closed sets   in a generalized bitopological space $ (X,\mu_1, \mu_2) $. We have also investigated the properties on some new separation axioms namely pairwise $ T_\frac{1}{4}$, pairwise $T_\frac{3}{8}$, pairwise $  T_\frac{5}{8} $ and have established their mutual relations with pairwise $ T_0 $, pairwise $ T_\frac{1}{2} $ and pairwise $ T_1 $. We have also shown that under certain conditions these axioms are equivalent.
\end{abstract}

\begin{nouppercase}
\maketitle
\end{nouppercase}

\let\thefootnote\relax\footnotetext{
\AMS{Primary 54A05, 54D10}
\Keywords {pairwise $ T_\frac{1}{4}$,  pairwise $T_\frac{3}{8}$,  pairwise $ T_\frac{5}{8} $, pairwise $ T_\frac{1}{2} $,  pairwise $\lambda $-closed sets, pairwise $ \lambda $-symmetric spaces.}

}

\section{\bf Introduction}
\label{sec:int}
A. D. Alexandroff \cite{AD} introduced the idea of $\sigma$-space (or Alexandroff space) in 1940 generalizing the idea of a topological space where only countable unions of open sets were taken to be open. The notion of a topological space was generalized to a bitopological space by J.C. Kelly \cite{JK} in 1963. Later many works on bitopological spaces were carried out \cite{RL, RI}. In 2001 the idea of space was used by B. K. Lahiri and P. Das \cite{LK} to generalize this idea of bitopological space to a bispace. Later many works were done in bispaces \cite{AC, KP, BS, BP, AP, AR}.

Recently a generalized topology $\mu $ \cite{MS} has been defined on a nonempty set $ X $ as the collection of subsets of $ X $ which is closed under arbitrary unions and satisfies  the condition that the null set $ \emptyset $ belongs to $ \mu $. The generalized topological space (in short a GT-space) may be denoted by $ (X, \mu) $. M. S. Sarsak \cite{MS} studied $ g_\mu $-closed sets on a generalized topological space and introduced some new separation axioms namely $ \mu$-$T_\frac{1}{4},  \mu$-$T_\frac{3}{8} $ and $ \mu$-$T_\frac{1}{2} $ axioms using the idea of $ \lambda_\mu $-closed sets and investigated their properties and relations among the axioms. We have studied these ideas in more general structure of a generalized bitopological space using the idea of $g_{\mu_i}$ and $\lambda_{\mu_i}$-closed sets with respect to  ${\mu_j}$. We have also studied the idea of pairwise $ \lambda $-closed sets and obtain some new separation axioms viz. pairwise $ T_\frac{1}{4}$, pairwise $T_\frac{3}{8}$, pairwise $  T_\frac{5}{8} $ and their relations with pairwise $ T_0 $, pairwise $ T_\frac{1}{2} $ and pairwise $ T_1 $ and investigate how far several results as valid in \cite{MS} are affected. Some examples are substantiated where necessary in support of these results.

 \section{\bf Generalized Bitopological Space}
 
 \label{sec:pre}

In a GT-space $ (X, \mu) $, members of $ \mu $ are called $ \mu $-open sets and complement of $ \mu $-open sets are called $ \mu $-closed sets. The definition of $ \mu $-closure,  $ \mu $-interior and $ \mu $-limit point of a set $ A $  are similar as in the case of a topological space and is denoted respectively by $ \overline{A_{\mu}} $,  $ Int_{\mu}(A) $ and $ \mu $-limit. Clearly $ \overline{A_{\mu}} $ and $ Int_{\mu}(A) $ are  respectively $ \mu $-closed set and $ \mu $-open set. So $ A $ is $ \mu $-closed iff $ A= \overline{A_{\mu}}$ and $ A $ is $ \mu $-open iff $ A= Int_{\mu}(A)$.\\

\begin{definition} \cite{JK}: A nonempty set $ X $ on which are defined two arbitrary topologies $ P $ and $ Q $ is said to be a bitopological space and is denoted by $ (X, P, Q) $.
\end{definition}

\begin{definition}\label{1} (cf.\cite{JK}): Suppose $ X $ is a nonempty set. If $ \mu_1 $ and $ \mu_2 $ be two collection of subsets of $ X $ such that $ (X, \mu_1) $ and $ (X, \mu_2) $ are two GT-spaces, then $ X $ is said to be a  generalized bitopological space, briefly GBT-space and is denoted by $ (X, \mu_1, \mu_2)$.
\end{definition}

Throughout the paper $ X $ stands for a GBT-space with generalized topologies $ \mu_1 $ and $ \mu_2 $ and sets are always subsets of $ X $ and unless otherwise stated. The letters $ R,  P$ and $ Q $ stand respectively for the set of real numbers, the set of irrational numbers and the set of rational numbers.
\begin{definition}\label{2} (cf.\cite{JK}): A  GBT-space $(X, \mu_1, \mu_2)$ is called pairwise $T_0$ if for any two distinct points  $x, y\in X $, either there is a $ \mu_1 $-open set $ U $ such that $ x\in U $ and $ y\not\in U $ or, there is a $ \mu_2 $-open set $ V $ such that $ y\in V $ and $ x\not\in V $.
\end{definition}
\begin{definition}\label{3} (cf.\cite{RL}: A  GBT-space $(X, \mu_1, \mu_2)$ is said to be pairwise $T_1$ if for any $x, y\in X, x\not=y $,  there exist open sets  $ U \in\mu_1, V  \in\mu_2$ such that $x \in U,  y \not \in U,  y\in V , x\not\in V$.
\end{definition}
\begin{definition}\label{4} (cf.\cite{RL}): A  GBT-space $(X, \mu_1, \mu_2)$ is said to be pairwise $R_0$ if for each $ \mu_i $-open set  $ G, x\in G $ implies $ \overline{\{x\}_{\mu_j}} \subset G ;i,j=1,2; i\not=j $. 
\end{definition}

\begin{definition}\label{6}(cf.\cite{MS}): In a GT-space $ (X,  \mu), A\subset X $. We denote $ A_{\mu}^\wedge = \cap \{U\in \mu: A\subset U\}$ and 
$A_{\mu}^\vee =\cup \{F: X-F \in \mu :F\subset A\}$. Note that if there is no $ \mu $-open set containing $ A $, then we define $ A_{\mu}^\wedge = X $ and if there is no $ \mu $-closed set contained in $ A $, then $ A_{\mu}^\vee = \emptyset $. Again, $ A $ is called a $ \wedge_{\mu}$-set if $ A= A_{\mu}^\wedge$ and $ A $ is called a $ \vee_{\mu}$-set if $A= A_{\mu}^\vee $.\end{definition}

\begin{lemma}\label{7} (cf.\cite{MS}):  Let $A , B$ be subsets of $(X, \mu)$ then  

(1) $\emptyset_{\mu}^\wedge = \emptyset $, \qquad      $\emptyset_{\mu}^\vee = \emptyset$, \qquad$X_{\mu}^\wedge = X$,\qquad$X_{\mu}^\vee = X $

(2) $A \subset A_{\mu}^\wedge$,   \qquad $A_{\mu}^\vee \subset A $

(3)  $(A_{\mu}^\wedge)^\wedge = A_{\mu}^\wedge$,\qquad  $(A_{\mu}^\vee)^\vee = A_{\mu}^\vee $.

(4)  If $A \subset B $ then $ A_{\mu}^\wedge \subset B_{\mu}^\wedge$.

(5) If $A \subset B $ then $ A_{\mu}^\vee \subset B_{\mu}^\vee $.
\end{lemma}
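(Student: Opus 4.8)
The plan is to read off all five statements directly from Definition \ref{6}, keeping careful track of the two degenerate conventions built into it: the one that sets $A_\mu^\wedge = X$ when no $\mu$-open set contains $A$, and the one that sets $A_\mu^\vee = \emptyset$ when no $\mu$-closed set is contained in $A$. It is convenient to abbreviate $\mathcal{U}_A = \{U \in \mu : A \subset U\}$ and $\mathcal{F}_A = \{F : X - F \in \mu,\ F \subset A\}$, so that, outside the degenerate cases, $A_\mu^\wedge = \bigcap \mathcal{U}_A$ and $A_\mu^\vee = \bigcup \mathcal{F}_A$. The entire argument then rests on one trivial observation: passing to a \emph{subfamily} enlarges an intersection and shrinks a union.

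For (1) I would first record that $\emptyset \in \mu$ always, so $X = X - \emptyset$ is always $\mu$-closed, whereas $X$ itself need not be $\mu$-open. Then $\emptyset_\mu^\wedge = \bigcap\{U : U \in \mu\} = \emptyset$ because $\emptyset \in \mu$, and $X_\mu^\vee \supset X$ because $X$ is a $\mu$-closed subset of $X$, so $X_\mu^\vee = X$. For the remaining two I split cases on whether $X \in \mu$: if $X \in \mu$ then $X_\mu^\wedge = X$ and $\emptyset_\mu^\vee = \emptyset$ come from the singleton families $\{X\}$ and $\{\emptyset\}$; if $X \notin \mu$ then $\mathcal{U}_X = \emptyset$ and $\mathcal{F}_\emptyset = \emptyset$, so the stated conventions return exactly the same values.

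For (2): if $\mathcal{U}_A = \emptyset$ then $A_\mu^\wedge = X \supset A$ by convention; otherwise $A \subset U$ for every $U \in \mathcal{U}_A$, hence $A \subset \bigcap \mathcal{U}_A = A_\mu^\wedge$, and dually $A_\mu^\vee \subset A$. For (3) I would use (2) to show that the families themselves coincide. Assuming $\mathcal{U}_A \neq \emptyset$, the inclusion $A \subset A_\mu^\wedge$ gives $\mathcal{U}_{A_\mu^\wedge} \subset \mathcal{U}_A$, while every $U \in \mathcal{U}_A$ already satisfies $A_\mu^\wedge \subset U$ by the definition of the intersection, which gives the reverse inclusion; hence $\mathcal{U}_{A_\mu^\wedge} = \mathcal{U}_A$ and $(A_\mu^\wedge)_\mu^\wedge = A_\mu^\wedge$. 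If instead $\mathcal{U}_A = \emptyset$, then $A_\mu^\wedge = X$ and necessarily $X \notin \mu$ (otherwise $X \in \mathcal{U}_A$), so (1) yields $X_\mu^\wedge = X$ and again $(A_\mu^\wedge)_\mu^\wedge = A_\mu^\wedge$. The equality $(A_\mu^\vee)_\mu^\vee = A_\mu^\vee$ is the exact dual, with $\mathcal{F}$ and unions replacing $\mathcal{U}$ and intersections.

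For (4) and (5): from $A \subset B$ one reads off $\mathcal{U}_B \subset \mathcal{U}_A$ and $\mathcal{F}_A \subset \mathcal{F}_B$. If $\mathcal{U}_B = \emptyset$ then $B_\mu^\wedge = X \supset A_\mu^\wedge$; otherwise $\mathcal{U}_A \neq \emptyset$ as well, and passing to the subfamily $\mathcal{U}_B$ gives $\bigcap \mathcal{U}_A \subset \bigcap \mathcal{U}_B$, i.e. $A_\mu^\wedge \subset B_\mu^\wedge$. Dually, if $\mathcal{F}_A = \emptyset$ then $A_\mu^\vee = \emptyset \subset B_\mu^\vee$; otherwise $\bigcup \mathcal{F}_A \subset \bigcup \mathcal{F}_B$, i.e. $A_\mu^\vee \subset B_\mu^\vee$. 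There is no genuine obstacle here; the only point requiring care — and the one most easily overlooked — is that $\mu$ is merely a generalized topology, so $X$ need not belong to $\mu$. Consequently each degenerate convention must be handled on its own, and in particular in (3) one must notice that the case $A_\mu^\wedge = X$ forces $X \notin \mu$ and therefore feeds back into the convention already settled in (1).
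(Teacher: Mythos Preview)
Your argument is correct; the paper itself omits the proof entirely, simply noting that the verifications are simple. Your treatment is in fact more careful than necessary, particularly the explicit case-splits on whether $X \in \mu$ and the observation in (3) that $\mathcal{U}_A = \emptyset$ forces $X \notin \mu$, but all of it is sound and nothing is missing.
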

The proof are simple and so are omitted.
\quad

\quad

\quad

\section{\bf  $  \textit \quad \textit  g_{\mu_i}$-closed sets w.r.to $ \mu_j $  and  pairwise $\textit T_\frac{1}{2} $ GBT-space}

\begin{definition}\label{8} (cf. \cite {TA} :) In a GBT-space $(X, \mu_1, \mu_2) $, a set $ A $ is said to be $g_{\mu_i}$-closed set with respect to $ \mu_j $ if and only if $ \overline{A_{\mu_i}}\subset U $ for any $ \mu_j $-open set $ U\supset A $. Again $ A $ is called $g_{\mu_i}$-open with respect to $ \mu_j $ if $ X -A $ is $g_{\mu_i}$-closed with respect to $ \mu_j $ or equivalently, $ F\subset Int_{\mu_i}(A) $ whenever $ F\subset A $ and $ F $ is $ \mu_j $-closed, $ i, j=1,2; i\not=j$. \end{definition}

\begin{remark}\label{9} Clearly a set $ A $ is $g_{\mu_i}$-closed with respect to $ \mu_j $ if and only if $ \overline{A_{\mu_i}}\subset A_{\mu_j}^\wedge $. By definition, every $ \mu_i $-closed set is $g_{\mu_i}$-closed with respect to  $ \mu_j $. But the converse may not be true which is  shown in Example \ref{11}. However, converse is true in a GBT-space $(X, \mu_1, \mu_2)$ if moreover the set $ A $ is $ g_{\mu_i} $-closed with respect to $ \mu_j $ which is also $ \mu_j $-open since $ A_{\mu_j}^\wedge=A ,i,j=1,2; i\not=j $. \end{remark}

\begin{note}\label{10} If a set $A$ is  $ \wedge _{\mu_j}$-set i.e. $ A_{\mu_j}^\wedge=A $ then $ A $ is $g_{\mu_i}$-closed  with respect to $ \mu_j $   if and only if $ A $ is $ \mu_i $-closed since $ \overline{A_{\mu_i}}\subset A_{\mu_j}^\wedge =A $; converse part is obvious. In particular $(A_{\mu_j}^\wedge)_{\mu_j}^\wedge = A_{\mu_j}^\wedge$ and so $A_{\mu_j}^\wedge$ is a $\wedge_{\mu_j}$-set.  Therefore $A_{\mu_j}^\wedge$ is $g_{\mu_i}$-closed with respect to $ \mu_j $ if and only if $A_{\mu_j}^\wedge$ is $ \mu_i $-closed, $ i, j=1,2; i\not=j $. \end{note}

\begin{example}\label{11} Suppose $ X=\{a, b, c\}, \mu_1 = \{\emptyset, \{c\}, \{a, c\}\}, \mu_2=\{\emptyset, \{b\}, \{a,b\}\} $. Then  $(X, \mu_1, \mu_2) $ is a generalized bitopological space but not a bitopological space. Here the set $ \{a\} $ is $ g_{\mu_1} $-closed with respect to $ \mu_2 $ but it is not $ \mu_1 $-closed. Also the set $ \{b, c\} $ is $ g_{\mu_2} $-closed with respect to $ \mu_1 $ but it is not $ \mu_2 $-closed.
\end{example}

\begin{theorem}\label{12}
 In a GBT-space $(X, \mu_1, \mu_2) $, if a set $ A $ of $ X $ is $ g_{\mu_i} $-closed with respect to $ \mu_j $, then $ \overline{A_{\mu_i}}- A $ does not contain any non-void  $ \mu_j $-closed set, $ i, j=1,2; i\not=j $.
\end{theorem}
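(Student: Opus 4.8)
The plan is to argue by contradiction, following the classical Levine-style argument for $g$-closed sets adapted to the bitopological setting. Suppose $A$ is $g_{\mu_i}$-closed with respect to $\mu_j$, and suppose for contradiction that there exists a non-void $\mu_j$-closed set $F$ with $F \subset \overline{A_{\mu_i}} - A$. From $F \subset \overline{A_{\mu_i}} - A$ we immediately get $F \cap A = \emptyset$, equivalently $A \subset X - F$, and since $F$ is $\mu_j$-closed, $X - F$ is a $\mu_j$-open set containing $A$.

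The key step is then to invoke the defining property of $g_{\mu_i}$-closedness with respect to $\mu_j$: since $X - F$ is a $\mu_j$-open set with $A \subset X - F$, Definition \ref{8} gives $\overline{A_{\mu_i}} \subset X - F$, i.e. $\overline{A_{\mu_i}} \cap F = \emptyset$. On the other hand, $F \subset \overline{A_{\mu_i}} - A \subset \overline{A_{\mu_i}}$, so $F = F \cap \overline{A_{\mu_i}} = \emptyset$, contradicting the assumption that $F$ is non-void. Hence no such $F$ exists, which is exactly the assertion of the theorem.

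I do not anticipate a genuine obstacle here; the proof is a short chain of set-theoretic inclusions. The only point requiring a little care is to make sure the two occurrences of the index pair behave correctly — the $\mu_j$-closed set $F$ must be tested against a $\mu_j$-open neighbourhood of $A$, which is consistent with Definition \ref{8}, so the asymmetry between $\mu_i$ and $\mu_j$ causes no trouble. One could alternatively phrase the argument using Remark \ref{9} (that $\overline{A_{\mu_i}} \subset A_{\mu_j}^\wedge$) together with Lemma \ref{7}, but the direct neighbourhood argument is cleaner and is the version I would write up.
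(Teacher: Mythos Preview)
Your proof is correct and follows essentially the same argument as the paper: both take a $\mu_j$-closed $F\subset \overline{A_{\mu_i}}-A$, pass to the $\mu_j$-open complement $X-F\supset A$, apply Definition~\ref{8} to get $\overline{A_{\mu_i}}\subset X-F$, and conclude $F=\emptyset$. The only cosmetic difference is that you phrase it as a proof by contradiction while the paper shows directly that any such $F$ is empty.
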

\begin{proof}
Suppose $ A$ is a $ g_{\mu_i} $-closed set with respect to $ \mu_j $, then $ \overline{A_{\mu_i}}\subset U $ when $ U $ is $ \mu_j $-open containing $ A $. Again suppose $ F\subset \overline{A_{\mu_i}}- A, F $ is  $ \mu_j $-closed. Therefore $ X-F $ is $ \mu_j $-open and $ A\subset X-F $. Then by Definition, $ \overline{A_{\mu_i}}\subset X-F$ which implies that $ F\subset X-\overline{A_{\mu_i}} $. Again $ F\subset \overline{A_{\mu_i}} $. Therefore $ F\subset (X-\overline{A_{\mu_i}})\cap\overline{A_{\mu_i}}=\emptyset $. 
\end{proof}

But the converse is not always true as seen from the Example \ref{13}

\begin{example} \label{13} Suppose  $ X = \{a, b, c\}, \mu_1=\{\emptyset, \{a\}, \{a, b\}\}$ and $\mu_2=\{\emptyset, \{b\}, \{b, c\}\}$.  Then $ (X, \mu_1, \mu_2) $ is a generalized bitopological space but not a bitopological space.  Take a set $ \{b\} $, then $ \overline{\{b\}_{\mu_1}}-\{b\}=\{c\} $ which does not contain any non-void $ \mu_2 $-closed set. But $ \overline{\{b\}_{\mu_1}}=\{b, c\}\not\subset \{b\} $, a $ \mu_2 $-open set and hence $ \{b\} $ is not $ g_{\mu_1} $-closed with respect to $ \mu_2 $. 
\end{example}

\begin{remark} In a GBT-space $(X, \mu_1, \mu_2) $, union and intersection of two $ g_{\mu_i} $-closed sets with respect to $ \mu_j $ may not be $ g_{\mu_i} $-closed  with respect to $ \mu_j, i, j=1,2; i\not=j $ as shown by the Examples \ref{14}. 
\end{remark}

\begin{example}\label{14}
Suppose  $ X = \{a, b, c, d\},  \mu_1=\{\emptyset,  \{a, d\}, \{c, d\}, \{a, c, d\}\}$ and $\mu_2=\{\emptyset, \{d\}, \{a, c, d\}\}$.  Then $ (X, \mu_1, \mu_2) $ is a GBT-space but not a bitopological space.  Take  $A=\{a, d\} $ and $ B=\{c, d\} $, then clearly $ A $ and $ B $ are $ g_{\mu_1} $-closed sets with respect to $ \mu_2 $. But $ A\cap B =\{d\}$ and $ A\cup B=\{a, c, d\} $ are not $ g_{\mu_1} $-closed with respect to $ \mu_2 $.
\end{example}

\begin{theorem} In a GBT-space $(X, \mu_1, \mu_2) $ union of two $ g_{\mu_i} $-closed sets with respect to $ \mu_j $ is $ g_{\mu_i} $-closed  if union of two $ \mu_i $-closed sets is $ g_{\mu_i} $-closed with respect to $ \mu_j,i,j=1,2;i\not=j $. \end{theorem}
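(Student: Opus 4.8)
The plan is to unwind the definition directly. Let $A$ and $B$ be $g_{\mu_i}$-closed with respect to $\mu_j$, fix an arbitrary $\mu_j$-open set $U$ with $A\cup B\subset U$, and aim to show $\overline{(A\cup B)_{\mu_i}}\subset U$; since $U$ is arbitrary this gives the conclusion by Definition \ref{8}. First I would note that $A\subset U$ and $B\subset U$, so the hypotheses on $A$ and $B$ yield $\overline{A_{\mu_i}}\subset U$ and $\overline{B_{\mu_i}}\subset U$, hence $\overline{A_{\mu_i}}\cup\overline{B_{\mu_i}}\subset U$.

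The key point — and the only place the extra hypothesis enters — is that in a GT-space a finite union of $\mu_i$-closed sets need not be $\mu_i$-closed, so one cannot simply write $\overline{(A\cup B)_{\mu_i}}=\overline{A_{\mu_i}}\cup\overline{B_{\mu_i}}$. Instead, since $\overline{A_{\mu_i}}$ and $\overline{B_{\mu_i}}$ are $\mu_i$-closed sets, the standing hypothesis says their union $C:=\overline{A_{\mu_i}}\cup\overline{B_{\mu_i}}$ is $g_{\mu_i}$-closed with respect to $\mu_j$. Applying Definition \ref{8} to the set $C$ together with the $\mu_j$-open set $U\supset C$ gives $\overline{C_{\mu_i}}\subset U$.

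It then remains to invoke monotonicity of the $\mu_i$-closure: if $S\subset T$ then $\overline{S_{\mu_i}}\subset\overline{T_{\mu_i}}$, because $\overline{T_{\mu_i}}$ is a $\mu_i$-closed set containing $S$ and $\overline{S_{\mu_i}}$ is the smallest such set. Since $A\cup B\subset\overline{A_{\mu_i}}\cup\overline{B_{\mu_i}}=C$, this yields $\overline{(A\cup B)_{\mu_i}}\subset\overline{C_{\mu_i}}\subset U$. As $U$ was an arbitrary $\mu_j$-open set containing $A\cup B$, Definition \ref{8} shows $A\cup B$ is $g_{\mu_i}$-closed with respect to $\mu_j$, for $i,j=1,2$, $i\neq j$.

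I do not expect a genuine obstacle here; the proof is a short two-fold application of the definition plus monotonicity of closure. The one subtlety worth flagging explicitly is the failure of $\overline{(A\cup B)_{\mu_i}}=\overline{A_{\mu_i}}\cup\overline{B_{\mu_i}}$ in a generalized topology, which is precisely what makes the stated hypothesis on unions of $\mu_i$-closed sets necessary rather than a free consequence, as contrasted with the classical bitopological setting.
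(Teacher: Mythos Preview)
Your proof is correct and follows essentially the same approach as the paper's own proof: take an arbitrary $\mu_j$-open $U\supset A\cup B$, use the $g_{\mu_i}$-closedness of $A$ and $B$ to get $\overline{A_{\mu_i}}\cup\overline{B_{\mu_i}}\subset U$, apply the hypothesis to this union of $\mu_i$-closed sets, and finish with monotonicity of the $\mu_i$-closure. Your explicit remark on why $\overline{(A\cup B)_{\mu_i}}=\overline{A_{\mu_i}}\cup\overline{B_{\mu_i}}$ can fail in a generalized topology is a welcome clarification that the paper leaves implicit.
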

\begin{proof}
Let $ A, B $ be two $ g_{\mu_i} $-closed sets with respect to $ \mu_j $ and $ A\cup B \subset U, U $ is $ \mu_j $-open. Therefore $ A\subset U, B\subset U $ implies $ \overline{A_{\mu_i}}\subset U $ and $ \overline{B_{\mu_i}}\subset U $, so $ \overline{A_{\mu_i}}\cup \overline{B_{\mu_i}} \subset U $. $ \overline{A_{\mu_i}}, \overline{B_{\mu_i}} $ are $ \mu_i $-closed sets. According to the given condition $ \overline{A_{\mu_i}}\cup \overline{B_{\mu_i}}$ is $ g_{\mu_i} $-closed with respect to $ \mu_j $, then $ \overline{( \overline{A_{\mu_i}}\cup \overline{B_{\mu_i}} )_{\mu_i}} \subset U $. Now $ A\cup B \subset \overline{A_{\mu_i}}\cup \overline{B_{\mu_i}} $ implies $ \overline{(A\cup B)_{\mu_i}}\subset \overline{( \overline{A_{\mu_i}}\cup \overline{B_{\mu_i}} )_{\mu_i}} \subset U$. 
\end{proof}

\begin{definition}  In a GT-space $(X, \mu_i) $, two sets $ A, B $ are said to be $ \mu_i $-weakly separated if there are two $ \mu_i$-open sets $U, V $ such that $A\subset U, B\subset V $ and $ A\cap V=B\cap U=\emptyset,i=1,2$.
\end{definition}

\begin{theorem} In a GBT-space $(X, \mu_1, \mu_2) $, union of two $ \mu_j $-weakly separated $ g_{\mu_i} $-open sets with respect to $ \mu_j $ is $ g_{\mu_i} $-open with respect to $ \mu_j; i, j=1,2; i\not=j $.\end{theorem}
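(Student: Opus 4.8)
The plan is to work from the characterization of $g_{\mu_i}$-openness with respect to $\mu_j$ recorded in Definition \ref{8}: a set $C$ is $g_{\mu_i}$-open with respect to $\mu_j$ exactly when every $\mu_j$-closed set $F\subset C$ satisfies $F\subset Int_{\mu_i}(C)$. So I would start from $\mu_j$-open sets $U,V$ witnessing the weak separation of $A$ and $B$, that is $A\subset U$, $B\subset V$, $A\cap V=B\cap U=\emptyset$, fix an arbitrary $\mu_j$-closed set $F$ with $F\subset A\cup B$, and aim to prove $F\subset Int_{\mu_i}(A\cup B)$.

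The key idea is to split $F$ along the $\mu_j$-closed sets $X-U$ and $X-V$. First I would observe that $F\cap(X-V)$ is an intersection of two $\mu_j$-closed sets, hence $\mu_j$-closed (since generalized topologies are closed under finite unions, complements of opens are closed under finite intersections); moreover $B\subset V$ gives $B\cap(X-V)=\emptyset$, so $F\cap(X-V)\subset(A\cup B)\cap(X-V)\subset A$. Since $A$ is $g_{\mu_i}$-open with respect to $\mu_j$, this yields $F\cap(X-V)\subset Int_{\mu_i}(A)$. Symmetrically $F\cap(X-U)$ is $\mu_j$-closed, is contained in $B$, and hence $F\cap(X-U)\subset Int_{\mu_i}(B)$.

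Next I would verify that these two pieces exhaust $F$. From $A\cap V=\emptyset$ and $B\cap U=\emptyset$ we get $(A\cup B)\cap(U\cap V)=\emptyset$, so $F\cap U\cap V=\emptyset$, i.e. $F\subset X-(U\cap V)=(X-U)\cup(X-V)$. Consequently $F=\bigl(F\cap(X-U)\bigr)\cup\bigl(F\cap(X-V)\bigr)\subset Int_{\mu_i}(B)\cup Int_{\mu_i}(A)\subset Int_{\mu_i}(A\cup B)$, the last inclusion holding because $Int_{\mu_i}(A)$ and $Int_{\mu_i}(B)$ are $\mu_i$-open subsets of $A\cup B$ and their union is again $\mu_i$-open, hence lies inside the largest $\mu_i$-open subset of $A\cup B$. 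This shows $A\cup B$ is $g_{\mu_i}$-open with respect to $\mu_j$.

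The only genuinely delicate points, and where I would be careful, are the closure properties of the generalized topologies: one uses that finite unions of $\mu_j$-open (respectively $\mu_i$-open) sets are open, so that $F\cap(X-U)$, $F\cap(X-V)$ are $\mu_j$-closed and $Int_{\mu_i}(A)\cup Int_{\mu_i}(B)$ is $\mu_i$-open; and since $X$ itself need not be $\mu_j$-open, I would avoid any step that tacitly assumes $\emptyset$ is $\mu_j$-closed or that $F$ itself admits a nice decomposition other than the one forced by the separating sets. Beyond that, the argument is a straightforward chase through the weak-separation hypothesis.
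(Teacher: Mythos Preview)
Your argument is correct and is essentially the paper's own proof: both start from the $g_{\mu_i}$-open characterization in Definition~\ref{8}, take a $\mu_j$-closed $F\subset A\cup B$, intersect it with the complements of the separating $\mu_j$-open sets to obtain $\mu_j$-closed pieces lying in $A$ and in $B$ respectively, push each piece into the corresponding $Int_{\mu_i}$, and finish via $Int_{\mu_i}(A)\cup Int_{\mu_i}(B)\subset Int_{\mu_i}(A\cup B)$. The only cosmetic difference is that the paper justifies the covering $F\subset (F\cap(X\setminus U))\cup(F\cap(X\setminus V))$ by first writing $F=(F\cap A)\cup(F\cap B)$ and using $A\subset X\setminus V$, $B\subset X\setminus U$, whereas you deduce it from $(A\cup B)\cap U\cap V=\emptyset$; these are equivalent one-line observations.
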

\begin{proof}
Suppose $ A_1,A_2 $ are two $ \mu_j $-weakly separated $ g_{\mu_i} $-open sets with respect to $ \mu_j $. Then there are $ \mu_j $-open sets $ U_1, U_2 $ such that $ A_1\subset U_1, A_2\subset U_2 $ and $A_1\cap U_2=A_2\cap U_1=\emptyset$. Let $ F_1=U_1^c, F_2=U_2^c $. Then $ F_1, F_2 $ are $ \mu_j $-closed sets and $ A_1\subset F_2, A_2\subset F_1 $. Again if $ P_n\subset A_n, n=1,2, P_n $ is $ \mu_j $-closed then $ P_n\subset Int_{\mu_i}(A_n) $ by Definition \ref{8} since  $ A_1,A_2 $ are two $g_{\mu_i} $-open sets with respect to $ \mu_j $.  Here $ Int_{\mu_i}(A_n) $ is $\mu_i $-open set, so $ Int_{\mu_i}(A_1)\cup Int_{\mu_i}(A_2) $ is a $\mu_i $-open set and $ Int_{\mu_i}(A_1)\cup Int_{\mu_i}(A_2) \subset   A_1\cup A_2 $. Let $ F\subset A_1\cup A_2 $ and $ F $ is $ \mu_j $-closed.  Now $ F= F\cap (A_1\cup A_2 )=(F\cap A_1)\cup (F\cap A_2)\subset (F\cap F_2)\cup (F\cap F_1) $ where $(F\cap F_2)$ and $ (F\cap F_1)$ are  $ \mu_j $-closed sets. Further $ (F\cap F_1)\subset (A_1\cup A_2)\cap F_1= (F_1\cap A_1)\cup (F_1\cap A_2)=\emptyset\cup (F_1\cap A_2)\subset A_2$ and so $ (F\cap F_1)\subset Int_{\mu_i}(A_2)$. Similarly, $ (F\cap F_2)\subset Int_{\mu_i}(A_1)$. So $ F\subset Int_{\mu_i}(A_1)\cup Int_{\mu_i}(A_2)\subset Int_{\mu_i}(A_1\cup A_2) $. Hence the result follows. 
\end{proof}

\begin{theorem}\label{15} 
A GBT-space $(X,  \mu_1, \mu_2)$  is pairwise $T_0$ if and only if for any pair of distinct points $x, y \in X $,  there is a set $A$ which contains one of them only  such that $ A $ is either $ \mu_i $-open or $ \mu_j $-closed, $ i, j=1,2 $.\end{theorem}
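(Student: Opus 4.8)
The plan is to verify the two implications directly from Definition \ref{2}, using in the nontrivial direction the observation that a $\mu_j$-closed set separating two points can be replaced by its complement, which is a $\mu_j$-open set separating them the other way round. This is the bitopological counterpart of the classical fact that a topological space is $T_0$ exactly when any two distinct points are separated by a set that is either open or closed.

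First I would treat the forward implication. Assume $(X,\mu_1,\mu_2)$ is pairwise $T_0$ and let $x,y$ be distinct points of $X$. By Definition \ref{2}, either there is a $\mu_1$-open set $U$ with $x\in U$, $y\notin U$, or there is a $\mu_2$-open set $V$ with $y\in V$, $x\notin V$. In the first case put $A=U$, in the second put $A=V$; in either case $A$ is $\mu_i$-open for some $i\in\{1,2\}$ and contains exactly one of $x$ and $y$, so in particular $A$ is ``$\mu_i$-open or $\mu_j$-closed'' as required. Note that the closed alternative is not even needed for this direction.

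Next, the converse. Suppose that for every pair of distinct points there is a set containing exactly one of them which is either $\mu_i$-open or $\mu_j$-closed, and fix distinct $x,y$ together with such a set $A$. The key step: if $A$ happens to be $\mu_j$-closed, then $X-A$ is $\mu_j$-open, and since $A$ contains precisely one of $x,y$, the set $X-A$ contains precisely the other one. Replacing $A$ by $X-A$ when necessary, we may therefore assume $A$ is $\mu_i$-open for some $i\in\{1,2\}$ and contains exactly one of $x,y$. It then remains to match this with the two alternatives of Definition \ref{2}: if the point lying in $A$ is $x$ and $i=1$, the first alternative holds with $U=A$; if that point is $y$ and $i=2$, the second holds with $V=A$; and the two remaining configurations (point $y$ with $i=1$, point $x$ with $i=2$) are brought to one of these by interchanging the roles of $x$ and $y$, which is harmless since the condition in Definition \ref{2} concerns the unordered pair $\{x,y\}$. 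Hence $(X,\mu_1,\mu_2)$ is pairwise $T_0$.

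I do not expect a genuine obstacle here, since the statement is essentially a reformulation. The only points requiring care are bookkeeping ones: correctly passing to the complement of a closed separating set in the converse, and lining up the possible open-set configurations with the two labelled alternatives of Definition \ref{2} using the symmetry in $x$ and $y$. It is also worth checking the degenerate conventions ($\emptyset$ and $X$ are $\mu_i$-open, but neither contains exactly one of two distinct points), so that they never interfere with the argument.
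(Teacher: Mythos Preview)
Your proposal is correct and follows essentially the same route as the paper: both directions rest on the observation that a $\mu_j$-closed set containing exactly one of two points has a $\mu_j$-open complement containing exactly the other, so the closed alternative in the characterizing condition can always be traded for an open one (and conversely). The paper's write-up is terser and less explicit about the case analysis, but the underlying idea is identical.
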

\begin{proof}    
Let the condition hold and $ A $ be a $ \mu_i $-open set containing $ x $ only. Then clearly the GBT-space is pairwise $ T_0 $. If $ x\in A $ but $ A $ is $ \mu_j $-closed, then $ X-A $ is $ \mu_j $-open and $ y\in X-A, x\not\in X-A $. Hence in this case also the GBT-space is pairwise $ T_0$.

Conversely, suppose that $(X,  \mu_1, \mu_2)$  is pairwise $T_0$ and  $x, y \in X, x\not=y $ and there exists a $ \mu_i $-open set $ A $ containing one of $ x, y, $ say $ x $ but not $ y $.Then it is done. There may be the case that $ x\in A $ and $ y \not\in A $, but $ A $ is not a $ \mu_i $-open set, then $ y\in X-A, x\not\in X-A$ and $ X-A $ is $ \mu_j $-open. So $ A $ is $ \mu_j $-closed. 
\end{proof}

\begin{remark}\label{16} If $ (X, \mu_1) $ or $ (X, \mu_2) $ is $T_0$ generalized topological space, then $(X,  \mu_1, \mu_2)$  is pairwise $T_0$. It can be easily shown that pairwise $ T_1 $ is pairwise $T_0$. But the converses may not be true as seen from the Example \ref{17} given below.\end{remark}

\begin{example}\label{17} Suppose  $ X = \{a, b, c\},  \mu_1=\{\emptyset,  \{a\}\}$ and $\mu_2=\{\emptyset, \{b\}\}$.  Then $ (X, \mu_1, \mu_2) $ is a generalized bitopological space but not a bitopological space. It is pairwise $T_0$ but not pairwise $ T_1 $. Also $ (X, \mu_1) $ or $ (X, \mu_2) $ are not $T_0$ generalized topological space.
\end{example}
\begin{theorem}\label{18}  If $ (X,  \mu_1, \mu_2) $ is  pairwise $ T_0 $, then for every pair of distinct points $ p,q\in X $, either $ p\not\in \overline{\{q\}_{\mu_1}} $ or $ q\not\in \overline{\{p\}_{\mu_2}} $. Proof is simple, so is omitted. \end{theorem}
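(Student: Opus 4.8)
The plan is to unpack the definition of pairwise $T_0$ and convert the separating open set into a closed set that witnesses one of the two desired non-memberships. Fix distinct points $p, q \in X$. By Definition \ref{2}, there are two cases: either there is a $\mu_1$-open set $U$ with $p \in U$ and $q \notin U$, or there is a $\mu_2$-open set $V$ with $q \in V$ and $p \notin V$.

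In the first case I would pass to the complement $F = X - U$. Then $F$ is $\mu_1$-closed, $q \in F$, and $p \notin F$. Since a generalized topology is closed under arbitrary unions, the family of $\mu_1$-closed sets is closed under arbitrary intersections, so $\overline{\{q\}_{\mu_1}}$ is the smallest $\mu_1$-closed set containing $q$; in particular $\overline{\{q\}_{\mu_1}} \subseteq F$, and therefore $p \notin \overline{\{q\}_{\mu_1}}$. The second case is entirely symmetric, with the indices $1,2$ and the points $p,q$ interchanged: $X - V$ is $\mu_2$-closed, contains $p$, and misses $q$, whence $\overline{\{p\}_{\mu_2}} \subseteq X - V$ and so $q \notin \overline{\{p\}_{\mu_2}}$. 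In either case one of the two alternatives in the statement holds, which completes the argument.

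There is essentially no obstacle; the only point meriting a word of care is the observation that in a GT-space the $\mu$-closure of a singleton coincides with the smallest $\mu$-closed set containing it, which is immediate from closure of $\mu$ under arbitrary unions. Alternatively, one could phrase the whole argument through the characterization in Theorem \ref{15}, choosing the set $A$ there to be the relevant complement, but the direct route above is the shortest.
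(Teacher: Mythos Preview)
Your argument is correct: unpacking Definition~\ref{2} and passing to complements yields exactly the two required non-memberships, and your remark about $\overline{\{q\}_{\mu_1}}$ being the smallest $\mu_1$-closed set containing $q$ is all that is needed. The paper omits the proof entirely as ``simple,'' so there is nothing to compare against; your write-up is precisely the kind of routine verification the authors had in mind.
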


\begin{definition}\label{19} A GBT-space $(X, \mu_1,\mu_2) $ is said to be pairwise symmetric if for any $ x,y\in X, x\in \overline{\{y\}_{\mu_i}}\Longrightarrow y\in \overline{\{x\}_{\mu_j}}; i, j=1,2; i\not=j$. \end{definition}

\begin{theorem}\label{20}Pairwise $ T_0 $ and pairwise symmetric GBT-space is pairwise $ T_1 $. \end{theorem}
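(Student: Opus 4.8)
The plan is to recast pairwise $T_1$ in terms of singleton closures: it suffices to show that for any two distinct points $x,y\in X$ one has \emph{both} $x\notin\overline{\{y\}_{\mu_1}}$ and $y\notin\overline{\{x\}_{\mu_2}}$. Indeed, if this holds, then $U:=X-\overline{\{y\}_{\mu_1}}$ is $\mu_1$-open (being the complement of the $\mu_1$-closed set $\overline{\{y\}_{\mu_1}}$) and $V:=X-\overline{\{x\}_{\mu_2}}$ is $\mu_2$-open; since each singleton closure contains its own point, $x\in U$, $y\notin U$, $y\in V$, $x\notin V$, which is exactly the requirement of Definition~\ref{3}. So the whole argument reduces to producing these two non-membership statements.

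To obtain them I would first extract one of the two from the pairwise $T_0$ hypothesis: by Theorem~\ref{18} at least one of $x\notin\overline{\{y\}_{\mu_1}}$, $y\notin\overline{\{x\}_{\mu_2}}$ holds (or, if one prefers to stay self-contained, convert the separating $\mu_i$-open set supplied by Definition~\ref{2} into a $\mu_i$-closed superset of the appropriate singleton that misses the other point). I would then deduce the remaining non-membership from pairwise symmetry (Definition~\ref{19}) used contrapositively. Concretely, the $i=2,j=1$ instance of pairwise symmetry reads $y\in\overline{\{x\}_{\mu_2}}\Rightarrow x\in\overline{\{y\}_{\mu_1}}$, so $x\notin\overline{\{y\}_{\mu_1}}$ forces $y\notin\overline{\{x\}_{\mu_2}}$; symmetrically the $i=1,j=2$ instance reads $x\in\overline{\{y\}_{\mu_1}}\Rightarrow y\in\overline{\{x\}_{\mu_2}}$, so $y\notin\overline{\{x\}_{\mu_2}}$ forces $x\notin\overline{\{y\}_{\mu_1}}$. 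Either way, both non-membership statements are now in hand, and the reformulated claim — hence the theorem — follows.

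I do not anticipate a genuine obstacle; the content is short once the reformulation is set up. The only thing demanding care is invoking pairwise symmetry with the correct index pair and in the contrapositive direction, so that it is fed by the closure-avoidance already secured from pairwise $T_0$ rather than by the one still to be proved. Beyond that, the proof only uses the standing facts about a GT-space — that $X$ is always $\mu_k$-closed, that arbitrary intersections of $\mu_k$-closed sets are $\mu_k$-closed (so the singleton closures exist and are closed), and that $\{p\}\subseteq\overline{\{p\}_{\mu_k}}$ — which are precisely what let one translate freely between "a point avoids a singleton closure" and "there is an open set separating that point from another."
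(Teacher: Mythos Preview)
Your proposal is correct and follows essentially the same route as the paper's proof: invoke Theorem~\ref{18} to secure one of the two non-membership statements $x\notin\overline{\{y\}_{\mu_1}}$ or $y\notin\overline{\{x\}_{\mu_2}}$, use pairwise symmetry contrapositively to obtain the other, and then pass to complements to exhibit the separating $\mu_1$- and $\mu_2$-open sets. The only cosmetic difference is that you take $X-\overline{\{y\}_{\mu_1}}$ and $X-\overline{\{x\}_{\mu_2}}$ directly as the open sets, whereas the paper first names a $\mu_1$-closed (resp.\ $\mu_2$-closed) superset of the singleton missing the other point and then complements; since singleton closures are themselves $\mu_k$-closed, the two formulations coincide.
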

\begin{proof}
Let $(X, \mu_1,\mu_2) $ be pairwise symmetric and pairwise $ T_0 $ and $ a, b\in X, a\not =b $. Since $(X, \mu_1,\mu_2) $ is  pairwise $ T_0 $ then by Theorem \ref{18}, either $ a\not\in\overline{\{b\}_{\mu_1}} $ or $ b\not\in\overline{\{a\}_{\mu_2}} $. So let $ a\not\in\overline{\{b\}_{\mu_1}} $. Now if $ b\in\overline{\{a\}_{\mu_2}} $ then by pairwise symmetryness, $ a\in\overline{\{b\}_{\mu_1}} $ which contradicts $ a\not\in\overline{\{b\}_{\mu_1}} $. Therefore $ b\not\in\overline{\{a\}_{\mu_2}} $. Since $ a\not\in\overline{\{b\}_{\mu_1}} $, there is a $ \mu_1 $-closed set $ F $ such that $ b\in F $ and $  a\not\in F $, so $ a\in X-F $, a $ \mu_1 $-open set and $ b\not\in X-F $. Again since $ b\not\in\overline{\{a\}_{\mu_2}} $, there is a $ \mu_2 $-closed set $ P $ such that $ a\in P $ and $  b\not\in P $. So $ b\in X-P $, a $ \mu_2 $-open set and $ a\not\in X-P $. Hence  $(X, \mu_1,\mu_2) $ is pairwise $ T_1 $.
\end{proof}

\begin{theorem}\label{21} If a GBT-space $(X,  \mu_1, \mu_2)$  is pairwise $T_1$, then each singleton is either $ \mu_1 $-closed or  $ \mu_2 $-closed.\end{theorem}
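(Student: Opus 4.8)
The plan is to fix an arbitrary point $x\in X$ and exhibit, for one of the two indices $j\in\{1,2\}$, a representation of $X-\{x\}$ as a union of $\mu_j$-open sets; since a generalized topology is closed under arbitrary unions, this forces $X-\{x\}$ to be $\mu_j$-open and hence $\{x\}$ to be $\mu_j$-closed.

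First I would dispose of the trivial case $X=\{x\}$: here $X-\{x\}=\emptyset$, which lies in both $\mu_1$ and $\mu_2$ by the definition of a generalized topology, so $\{x\}$ is $\mu_1$-closed. Assume then that $X\neq\{x\}$, so that $X-\{x\}\neq\emptyset$. For each $y\in X$ with $y\neq x$, apply the pairwise $T_1$ hypothesis (Definition \ref{3}) to the points $x,y$: this produces, among other things, a $\mu_2$-open set $V_y$ with $y\in V_y$ and $x\notin V_y$, i.e. $V_y\subset X-\{x\}$. Then $\bigcup_{y\neq x}V_y\subset X-\{x\}$, while every point $y\neq x$ lies in $V_y$ and hence in the union; therefore $\bigcup_{y\neq x}V_y=X-\{x\}$. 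Because $\mu_2$ is closed under arbitrary unions, $X-\{x\}\in\mu_2$, i.e. $\{x\}$ is $\mu_2$-closed, which in particular makes it ``$\mu_1$-closed or $\mu_2$-closed''. Running the same argument with the two topologies interchanged (applying pairwise $T_1$ to the pair $y,x$ to obtain $\mu_1$-open sets $U_y$ with $y\in U_y$, $x\notin U_y$) in fact shows each singleton to be simultaneously $\mu_1$-closed and $\mu_2$-closed, so the disjunction in the statement is not sharp.

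There is essentially no serious obstacle here. The only point requiring care is to select, out of the two open sets furnished by pairwise $T_1$ for a pair of points, the one that \emph{excludes} the fixed point $x$ rather than the one that excludes the variable point $y$, and then to invoke the arbitrary-union axiom of a generalized topology (not merely finite unions). One should also note that no nontrivial intersection is taken anywhere in the argument, so the fact that a generalized topology need not be closed under finite intersections is irrelevant.
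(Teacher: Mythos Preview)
Your argument hinges on reading Definition~\ref{3} as a condition on \emph{ordered} pairs: given the fixed $x$ and any $y\neq x$, you assume one can always find $V_y\in\mu_2$ with $y\in V_y$ and $x\notin V_y$. Under that reading your proof is valid and indeed yields the stronger conclusion that every singleton is simultaneously $\mu_1$- and $\mu_2$-closed.

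The paper, however, uses the weaker \emph{unordered}-pair reading: for each pair $\{x,y\}$ one of the two possible labelings satisfies the clause, but not necessarily both. This is evident from Example~\ref{25} ($X=\{a,b\}$, $\mu_1=\{\emptyset,\{a\}\}$, $\mu_2=\{\emptyset,\{b\}\}$), which the paper declares pairwise $T_1$ even though for the ordering $(x,y)=(b,a)$ there is no $\mu_1$-open set containing $b$ and omitting $a$; it is also why the paper's own proof of the theorem splits into Case~I and Case~II according to which labeling works. Under this intended reading your key step fails: taking $x=b$ in Example~\ref{25}, there is no $\mu_2$-open $V_a$ with $a\in V_a$ and $b\notin V_a$, so $X-\{b\}$ cannot be written as a union of $\mu_2$-open sets, and indeed $\{b\}$ is not $\mu_2$-closed there (it is only $\mu_1$-closed). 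The paper explicitly notes, just before Note~\ref{23}, that pairwise $T_1$ need not force every singleton to be closed in both topologies, which directly refutes the strengthening in your final paragraph. To repair the argument you must, as the paper does, allow the separating open set around each $y$ to come from \emph{either} $\mu_1$ or $\mu_2$, depending on $y$.
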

\begin{proof}
Let $(X,  \mu_1, \mu_2)$  be pairwise $T_1$  and $ x\in X $. Suppose $ y\not\in \{x\} $,  then $ x\not=y $. For  pairwise $T_1$ there may arise two cases.

Case I: There exist $ \mu_1 $-open set $ U $ and  $ \mu_2 $-open set $ V $ such that $x \in U,  y \not \in U,  y\in V , x\not\in V$. Then $ y $ can not be a $ \mu_2 $-limit point of $ \{x\} $, hence $ \{x\} $ is $ \mu_2 $-closed.

Case II: Let there exist  $ \mu_1 $-open set $ U $ and  $ \mu_2 $-open set $ V $ such that $y \in U,  x \not \in U,  x\in V , y\not\in V$. So $ y $ can not be a $ \mu_1 $-limit point of $ \{x\} $, hence $ \{x\} $ is $ \mu_1 $-closed.
\end{proof}

The converse of the Theorem \ref{21} may not be true as shown by the Example \ref{22}.
 
\begin{example}\label{22}
 Suppose  $ X = \{a, b, c\},  \mu_1=\{\emptyset, \{b, c\}, \{c, a\}, \{a, b, c\}\}$ and $\mu_2=\{\emptyset, \{a, b\}\}$.  Then $ (X, \mu_1, \mu_2) $ is a generalized bitopological space but not a bitopological space. Here each singleton is either $ \mu_1 $-closed or  $ \mu_2 $-closed. Now take $ a, b\in X, a\not=b;a, b $ both belong to $ \mu_2 $-open set $ \{a, b\} $. So the GBT-space is not pairwise $ T_1 $.
\end{example}
Example \ref{25} shows Pairwise $ T_1 $  may not imply each singleton is both  $ \mu_1 $ and  $ \mu_2 $-closed.
\begin{note}\label{23} However, converse of the Theorem \ref{21} holds in a GBT-space $(X,  \mu_1, \mu_2)$, if each singleton is both  $ \mu_1 $ and  $ \mu_2 $-closed which can be checked easily. \end{note}
\begin{remark}\label{24}   Example \ref{25} shows that pairwise $ T_1 $ GBT-space $(X,  \mu_1, \mu_2)$ may not imply individual $ T_1 $ GT-space $ (X, \mu_i), i=1,2 $. However any one of $ (X, \mu_1) $ and $ (X, \mu_2) $ may be $ T_1 $ without being pairwise $ T_1 $ as shown in Example\ref{26} .\end{remark}

\begin{example}\label{25} Suppose  $ X = \{a, b\},  \mu_1=\{\emptyset, \{a\}\}$ and $\mu_2=\{\emptyset, \{b\}\}$.  Then $ (X, \mu_1, \mu_2) $ is a generalized bitopological space but not a bitopological space. Clearly the GBT-space is pairwise $ T_1 $ but $ (X, \mu_i), i=1,2, $ is not individual $ T_1 $ generalized topological space.
\end{example}

\begin{example}\label{26} Suppose  $ X = \{a, b, c\},  \mu_1=\{\emptyset, \{a, b\}, \{b, c\}, \{c, a\}, \{a, b, c \}\}$ and $\mu_2=\{\emptyset, \{a, b\}\}$.  Then $ (X, \mu_1, \mu_2) $ is a generalized bitopological space but not a bitopological space. Here $ (X, \mu_1) $ is a $ T_1 $ generalized topological space. Now if we take $ a, b\in X, a\not=b $, then $ a, b$ both belong to $ \{a, b\} $, a $ \mu_2 $-open set, which contradicts the definition of pairwise $ T_1 $, so $ (X, \mu_1, \mu_2) $ is not pairwise $ T_1 $. \end{example}

\begin{definition}\label{27} (c.f.\cite{TA}) :  A GBT-space $ (X, \mu_1, \mu_2) $ is said to be pairwise $ T_\frac{1}{2}$ if every $ g_{\mu_i}$-closed  set with respect to $ \mu_j $ is $ \mu_i $-closed, $ i, j=1,2; i\not=j $.\end{definition}

\begin{theorem}\label{28}  A GBT-space $ (X, \mu_1, \mu_2)$ is  pairwise $ T_\frac{1}{2}$ if and only if a singleton is not $ \mu_j $-closed implies that it is $ \mu_i $-open  $ i, j=1,2; i\not=j $. \end{theorem}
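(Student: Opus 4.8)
The plan is to prove the two implications separately, in both cases exploiting Theorem \ref{12} together with the elementary fact that the only subsets of $X$ sandwiched between $X-\{x\}$ and $X$ are $X-\{x\}$ itself and $X$.

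For the ``only if'' direction, suppose $(X,\mu_1,\mu_2)$ is pairwise $T_\frac{1}{2}$ and let $\{x\}$ be a singleton that is not $\mu_j$-closed; equivalently, $A:=X-\{x\}$ is not $\mu_j$-open. I would then look at the $\mu_j$-open sets containing $A$: any such set must equal $X-\{x\}$ or $X$, and the first possibility is ruled out, so $A_{\mu_j}^\wedge=X$ (this holds whether or not $X$ itself happens to be $\mu_j$-open, because if no $\mu_j$-open set contains $A$ then $A_{\mu_j}^\wedge=X$ by Definition \ref{6}). By Remark \ref{9}, $A$ is $g_{\mu_i}$-closed with respect to $\mu_j$ since $\overline{A_{\mu_i}}\subset X=A_{\mu_j}^\wedge$ trivially. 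Pairwise $T_\frac{1}{2}$ (Definition \ref{27}) then forces $A=X-\{x\}$ to be $\mu_i$-closed, i.e.\ $\{x\}$ is $\mu_i$-open. The argument is the same for $(i,j)=(1,2)$ and $(i,j)=(2,1)$.

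For the ``if'' direction, assume every non-$\mu_j$-closed singleton is $\mu_i$-open, and let $A$ be $g_{\mu_i}$-closed with respect to $\mu_j$; I must show $\overline{A_{\mu_i}}=A$, and since $A\subset\overline{A_{\mu_i}}$ always holds it suffices to show there is no point $x\in\overline{A_{\mu_i}}-A$. Suppose such an $x$ existed. Then $\{x\}$ is a non-void subset of $\overline{A_{\mu_i}}-A$, so by Theorem \ref{12} it cannot be $\mu_j$-closed; by hypothesis $\{x\}$ is therefore $\mu_i$-open. But $\{x\}$ is then a $\mu_i$-open set containing the point $x\in\overline{A_{\mu_i}}$, so $\{x\}\cap A\not=\emptyset$, whence $x\in A$ — contradicting $x\notin A$. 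Hence $\overline{A_{\mu_i}}\subset A$, so $A$ is $\mu_i$-closed; as this is symmetric in the indices, $(X,\mu_1,\mu_2)$ is pairwise $T_\frac{1}{2}$.

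The only delicate point is the book-keeping with $A_{\mu_j}^\wedge$ in the first direction, since in a GT-space $X$ need not be $\mu_j$-open; once one observes that $A_{\mu_j}^\wedge=X$ for $A=X-\{x\}$ in every case, that direction is immediate. The second direction rests on the standard fact that in a GT-space $x\in\overline{A_{\mu}}$ precisely when every $\mu$-open set containing $x$ meets $A$, which is what lets the $\mu_i$-open singleton $\{x\}$ produce the contradiction.
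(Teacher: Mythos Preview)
Your proof is correct and follows essentially the same approach as the paper's own proof: both directions hinge on Remark~\ref{9} and Theorem~\ref{12} in exactly the way you use them. Your ``if'' direction is organized as a single contradiction argument rather than the paper's two-case split on whether $\{p\}$ is $\mu_j$-closed, but the underlying reasoning is identical; you are also a bit more explicit than the paper about why $A_{\mu_j}^\wedge=X$ holds even when $X\notin\mu_j$, which is a welcome clarification.
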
 
\begin{proof}
Suppose $ (X, \mu_1, \mu_2) $ is  pairwise $ T_\frac{1}{2}$ and  $ x\in X $ and $ \{x\} $ is not $ \mu_j $-closed. Then $ X-\{x\}=A $ (say), is not $ \mu_j $-open. But $ X $ is the only possible $ \mu_j $-open set containing $ A $ and so $ A_{\mu_j}^\wedge= X $, therefore $ \overline{A_{\mu_i}}\subset X $. Hence $ A $ is $ g_{\mu_i}$-closed  with respect to $ \mu_j $ by Remark \ref{9} and so $ A $ is $ \mu_i $-closed since $ X $ is pairwise $ T_\frac{1}{2}$. This implies $ \{x\} $ is $ \mu_i $-open.

Conversely, suppose that a subset $ A $ of $ X $ is $ g_{\mu_i} $-closed with respect to $ \mu_j $. We claim that $ A $ is $ \mu_i $-closed. Let $ p\in \overline{A_{\mu_i}} $ and $ \{p\} $ is not $ \mu_j $-closed. Then $ \{p\} $ is $ \mu_i $-open. So $ p $ can not be a $ \mu_i $-limit point of $ A $, so $ p\in A $. Hence $ A $ is $ \mu_i $ closed. Now let $ \{p\} $ be $ \mu_j $-closed, then it follows from Theorem \ref{12} that $ \{p\}\not\subset \overline{A_{\mu_i}} - A $ and so $ p\in A $. Hence $ A $ is $ \mu_i $ closed.
\end{proof}

\begin{corollary}\label{29} A GBT-space $ (X, \mu_1, \mu_2) $ is pairwise $ T_\frac{1}{2}$ if and only if a singleton is not $ \mu_i $-open, then it is $ \mu_j $-closed  $ i, j=1,2; i\not=j $. Proof is simple so is omitted. \end{corollary}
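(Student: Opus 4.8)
The plan is to obtain the statement as an immediate logical reformulation of Theorem \ref{28}. That theorem characterises pairwise $T_\frac{1}{2}$ by the condition that, for every singleton and every pair $i,j=1,2$ with $i\ne j$, if the singleton fails to be $\mu_j$-closed then it is $\mu_i$-open; the corollary asserts the apparently different condition that, for every singleton and every such pair, if the singleton fails to be $\mu_i$-open then it is $\mu_j$-closed. First I would observe that these two conditions are contrapositives of one another once the indices are matched correctly: for a fixed ordered pair $(i,j)$, the implication ``$\{x\}$ not $\mu_j$-closed $\Rightarrow$ $\{x\}$ is $\mu_i$-open'' is logically equivalent to the implication ``$\{x\}$ not $\mu_i$-open $\Rightarrow$ $\{x\}$ is $\mu_j$-closed,'' since negating ``$\mu_i$-open'' gives ``not $\mu_i$-open'' and negating ``not $\mu_j$-closed'' gives ``$\mu_j$-closed.''

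Next I would spell out that the quantifier ``$i,j=1,2;\ i\ne j$'' ranges over both ordered pairs $(1,2)$ and $(2,1)$, so the hypothesis of Theorem \ref{28} is, for all $x\in X$, the conjunction of ``$\{x\}$ not $\mu_2$-closed $\Rightarrow$ $\{x\}$ is $\mu_1$-open'' and ``$\{x\}$ not $\mu_1$-closed $\Rightarrow$ $\{x\}$ is $\mu_2$-open.'' Taking the contrapositive of the first gives ``$\{x\}$ not $\mu_1$-open $\Rightarrow$ $\{x\}$ is $\mu_2$-closed,'' and of the second gives ``$\{x\}$ not $\mu_2$-open $\Rightarrow$ $\{x\}$ is $\mu_1$-closed''; their conjunction over all $x$ is exactly the condition in the corollary. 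Hence the two conditions are equivalent, and substituting this equivalent condition into the statement of Theorem \ref{28} yields the ``if and only if'' of the corollary directly, with both directions handled simultaneously by the biconditional in Theorem \ref{28}.

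Since the argument is a purely propositional rearrangement, there is essentially no technical obstacle; the only point needing a moment's care is the index bookkeeping, namely forming the contrapositive within a fixed ordered pair $(i,j)$ rather than inadvertently swapping $i$ and $j$ while negating. As an alternative, for a self-contained proof one could repeat the proof of Theorem \ref{28} almost verbatim, using Remark \ref{9} (the characterisation $\overline{A_{\mu_i}}\subset A_{\mu_j}^\wedge$) and Theorem \ref{12} (no non-void $\mu_j$-closed set lies in $\overline{A_{\mu_i}}-A$), but the contrapositive route is shorter and makes the logical relationship transparent, which is presumably why the authors call the proof simple and omit it.
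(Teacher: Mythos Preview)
Your proposal is correct: the corollary is precisely the contrapositive, for each ordered pair $(i,j)$, of the singleton condition in Theorem~\ref{28}, and your index bookkeeping is accurate. The paper omits the proof entirely, but this contrapositive reading is almost certainly what the authors intend when they call it simple, so your approach matches the spirit of the omission.
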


 \begin{theorem}\label{30} In a GBT-space $ (X, \mu_1, \mu_2) $, each singleton   is either $ \mu_i $-open or $ \mu_j $-closed if and only if each $ g_{\mu_i} $-closed set with respect to $ \mu_j $ is $ \mu_i $-closed, $ i, j=1, 2; i\not=j $.\end{theorem}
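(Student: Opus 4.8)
The plan is to observe that this equivalence is, for each fixed ordered pair $(i,j)$ with $i\neq j$, essentially a repackaging of Theorem \ref{28} together with Definition \ref{27}. Indeed, ``each singleton is either $\mu_i$-open or $\mu_j$-closed'' is logically equivalent to ``a singleton that is not $\mu_j$-closed is $\mu_i$-open'' (a disjunction being symmetric), and ``each $g_{\mu_i}$-closed set with respect to $\mu_j$ is $\mu_i$-closed'' is precisely the condition occurring in Theorem \ref{28} and defining pairwise $T_\frac{1}{2}$ in Definition \ref{27}. So in principle one could simply cite Theorem \ref{28}. For completeness I would instead reproduce the short direct argument, which uses only Theorem \ref{12} and Remark \ref{9}.

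For the forward implication, suppose every singleton is $\mu_i$-open or $\mu_j$-closed and let $A$ be $g_{\mu_i}$-closed with respect to $\mu_j$; it suffices to prove $\overline{A_{\mu_i}}\subset A$. Fix $p\in\overline{A_{\mu_i}}$. If $\{p\}$ is $\mu_i$-open, then it is a $\mu_i$-open set containing $p$, hence $\{p\}\cap A\neq\emptyset$, so $p\in A$. If instead $\{p\}$ is $\mu_j$-closed, then by Theorem \ref{12} the set $\overline{A_{\mu_i}}-A$ contains no non-void $\mu_j$-closed set, so $\{p\}\not\subset\overline{A_{\mu_i}}-A$; together with $p\in\overline{A_{\mu_i}}$ this forces $p\in A$. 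Thus $\overline{A_{\mu_i}}\subset A$, i.e. $A$ is $\mu_i$-closed.

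For the converse, suppose every $g_{\mu_i}$-closed set with respect to $\mu_j$ is $\mu_i$-closed. By the logical equivalence noted above it is enough to show that a singleton $\{x\}$ which is not $\mu_j$-closed must be $\mu_i$-open. Put $A=X-\{x\}$; then $A$ is not $\mu_j$-open, and since the only superset of $A$ in $X$ other than $A$ itself is $X$, the set $X$ is the only $\mu_j$-open set containing $A$, whence $A_{\mu_j}^\wedge=X$. Consequently $\overline{A_{\mu_i}}\subset X=A_{\mu_j}^\wedge$, which by Remark \ref{9} means $A$ is $g_{\mu_i}$-closed with respect to $\mu_j$; by hypothesis $A$ is then $\mu_i$-closed, so $\{x\}=X-A$ is $\mu_i$-open. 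I do not expect a genuine obstacle here: the only points needing attention are the correct reading of $A_{\mu_j}^\wedge$ in the degenerate clause of Definition \ref{6} and the appeal to Theorem \ref{12}, both already available.
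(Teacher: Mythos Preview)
Your proof is correct and follows essentially the same route as the paper: the forward direction takes $p\in\overline{A_{\mu_i}}$ and splits according to whether $\{p\}$ is $\mu_i$-open (forcing $p\in A$ directly) or $\mu_j$-closed (invoking Theorem~\ref{12}), and the converse passes to $A=X-\{x\}$ and uses $A_{\mu_j}^\wedge=X$ together with Remark~\ref{9}. Your additional observation that Theorem~\ref{30} is, for a fixed ordered pair $(i,j)$, the component-wise version of the equivalence already contained in the proof of Theorem~\ref{28} is accurate and worth noting.
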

 \begin{proof}
 Suppose each singleton is either $ \mu_i $-open or $ \mu_j $-closed. Let a subset $ B $ be $ g_{\mu_i} $-closed with respect to $ \mu_j $ and $ p\in \overline{B_{\mu_i}} $. Then either $p\in B  $ or $ p $ is the $ \mu_i $-limit point of $ B $. Suppose $ \{p\} $ is $ \mu_i $-open, so $ p $ cannot be a $ \mu_i $ limit point of $ B $, then $p\in B  $. Hence $ B $ is $ \mu_i $-closed. Next let  $ \{p\} $ be $ \mu_j $-closed. Then it follows from Theorem \ref{12} that $ \{p\}\not\subset \overline{B_{\mu_i}}-B $ and so $p\in B  $. Hence $ B $ is $ \mu_i $-closed.
 
 Conversely, suppose the conditions hold and let $ x\in X $ and $ \{x\} $ is not $ \mu_j $-closed. Hence $ X-\{x\}=A $ (say), is not $ \mu_j $-open and so $ A_{\mu_j}^\wedge= X $, therefore $ \overline{A_{\mu_i}}\subset  A_{\mu_j}^\wedge $. So $ A $ is $ g_{\mu_i}$-closed  with respect to $ \mu_j $ and hence $ A $ is $ \mu_i $-closed. This implies $ \{x\} $ is $ \mu_i $-open.  
\end{proof}
Note that in a GBT-space if each singleton   is either $ \mu_i $-open or $ \mu_i $-closed, $ i=1 $ or $ 2 $ then it does not imply that each $ g_{\mu_j} $-closed set with respect to $ \mu_i $ is $ \mu_j $-closed; $ i,j=1,2;i\not=j $,  as shown in the Example \ref{31}.\\

\begin{example}\label{31} Suppose  $ X = \{a, b, c, d\},  \mu_1=\{\emptyset,  \{a\}, \{b\}, \{a, b\}\}$ and $\mu_2=\{\emptyset, X $, \{a, b, d\}, $\{a, b, c\}\}$.  Then $ (X, \mu_1, \mu_2) $ is a GBT-space but not a bitopological space. Here each singleton is either $ \mu_1 $-open or $ \mu_2 $-closed. Now $ \{b, c, d\} $ is $ g_{\mu_2} $-closed with respect to $ \mu_1 $ but it is not $ \mu_2$-closed. So it is not pairwise $ T_\frac{1}{2}$.\end{example}

\begin{remark}\label{32} In view of Theorem \ref{28} and \ref{30}, it follows that a GBT-space  $ (X, \mu_1, \mu_2) $ is pairwise $ T_\frac{1}{2} $ if and only if both the conditions (i) and (ii) hold for any singleton $ \{x\}, x\in X $:

(i) $ \{x\} $ is either $ \mu_1 $-open or, $ \mu_2 $-closed

(ii) $ \{x\} $ is either $ \mu_2 $-open or, $ \mu_1 $-closed.
\end{remark}

\begin{theorem}\label{33} A GBT-space  $ (X, \mu_1, \mu_2) $ is pairwise $ T_\frac{1}{2} $ if and only if each singleton is either $ \mu_i $-open or $ \mu_i $-closed, $ i=1,2 $. Proof follows from Remark \ref{32}.\end{theorem}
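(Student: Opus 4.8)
The plan is to obtain Theorem \ref{33} directly from Remark \ref{32}. By that remark, $(X,\mu_1,\mu_2)$ is pairwise $T_\frac{1}{2}$ if and only if every singleton $\{x\}$ satisfies both (i) ``$\{x\}$ is $\mu_1$-open or $\mu_2$-closed'' and (ii) ``$\{x\}$ is $\mu_2$-open or $\mu_1$-closed''. Hence the entire content to be verified is the pointwise assertion: for a fixed $x\in X$, conditions (i) and (ii) hold together precisely when $\{x\}$ is ``$\mu_1$-open or $\mu_1$-closed'' \emph{and} ``$\{x\}$ is $\mu_2$-open or $\mu_2$-closed''. I would fix $x$ and prove the two implications separately, in each case distinguishing according to whether $\{x\}$ is $\mu_1$-open and whether $\{x\}$ is $\mu_2$-open.

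For the forward direction, assume (i) and (ii). If $\{x\}$ is $\mu_1$-open the $\mu_1$-dichotomy of the conclusion is immediate, and similarly if $\{x\}$ is $\mu_2$-open. In the remaining subcase $\{x\}$ is not $\mu_1$-open, so (i) forces $\{x\}$ to be $\mu_2$-closed; I would then reinsert this information into (ii) --- or invoke Theorem \ref{28} and Corollary \ref{29}, which record the equivalent ``not $\mu_i$-open $\Rightarrow$ $\mu_j$-closed'' / ``not $\mu_j$-closed $\Rightarrow$ $\mu_i$-open'' phrasings --- to pin down the $\mu_1$-status of $\{x\}$, and argue symmetrically for the $\mu_2$-clause. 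For the converse, assume each singleton is $\mu_i$-open or $\mu_i$-closed for $i=1,2$; to get (i), note that if $\{x\}$ is not $\mu_1$-open then it is $\mu_1$-closed, and combine this with the $\mu_2$-dichotomy (and, if needed, Theorem \ref{30} applied to the pair $(2,1)$) to conclude that $\{x\}$ is $\mu_2$-closed, i.e.\ (i) holds; condition (ii) follows by interchanging the roles of $1$ and $2$. Having (i) and (ii) for every singleton, Remark \ref{32} returns pairwise $T_\frac{1}{2}$.

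The step I expect to be the real obstacle is precisely this translation between the ``mixed-index'' dichotomies of Remark \ref{32} (``$\mu_1$-open or $\mu_2$-closed'', ``$\mu_2$-open or $\mu_1$-closed'') and the ``same-index'' dichotomies in the statement (``$\mu_i$-open or $\mu_i$-closed'', $i=1,2$): these are genuinely different Boolean combinations of the four basic facts about $\{x\}$, so the equivalence cannot be settled by formal propositional manipulation alone and must be teased out of the interaction of $\mu_1$ and $\mu_2$ on the singleton, using Theorems \ref{28} and \ref{30} and Corollary \ref{29}. The configuration deserving the most scrutiny is the one in which $\{x\}$ is both open and closed with respect to one of the two generalized topologies but is neither open nor closed with respect to the other, since that is where the two formulations are furthest apart and where the case analysis has to be carried out most carefully.
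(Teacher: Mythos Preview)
You have put your finger on exactly the right spot, but you have not noticed that the obstacle is fatal rather than merely delicate. The tools you propose to invoke---Theorem~\ref{28}, Corollary~\ref{29}, Theorem~\ref{30}---are all reformulations of the \emph{same} mixed-index conditions (i) and (ii) of Remark~\ref{32}; none of them produces any information about whether a singleton is $\mu_i$-open or $\mu_i$-closed \emph{with the same index}~$i$. Consequently the case analysis you sketch cannot close the gap, because in the configuration you yourself flag---$\{x\}$ both $\mu_1$-open and $\mu_1$-closed but neither $\mu_2$-open nor $\mu_2$-closed---conditions (i) and (ii) are satisfied while the ``same-index'' condition for $i=2$ fails. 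Concretely, take $X=\{a,b\}$, $\mu_1=\{\emptyset,\{a\},\{b\},X\}$, $\mu_2=\{\emptyset\}$: one checks directly from Definition~\ref{27} that this GBT-space is pairwise $T_{\frac12}$, yet $\{a\}$ is neither $\mu_2$-open nor $\mu_2$-closed. So the forward implication of Theorem~\ref{33} does not follow from Remark~\ref{32}; in fact it is false as stated.

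The converse fails as well, by the ``transposed'' configuration. Take $X=\{a,b\}$, $\mu_1=\{\emptyset,\{b\}\}$, $\mu_2=\{\emptyset,\{a\}\}$ (this is Example~\ref{25}). Here $\{a\}$ is $\mu_1$-closed and $\mu_2$-open, $\{b\}$ is $\mu_1$-open and $\mu_2$-closed, so every singleton is $\mu_i$-open or $\mu_i$-closed for each $i=1,2$; but $\{b\}$ is $g_{\mu_1}$-closed with respect to $\mu_2$ (vacuously, since no $\mu_2$-open set contains it) and not $\mu_1$-closed, so the space is not pairwise $T_{\frac12}$. In short, the two Boolean combinations $(O_1\vee C_2)\wedge(O_2\vee C_1)$ and $(O_1\vee C_1)\wedge(O_2\vee C_2)$ are genuinely inequivalent, and nothing in the surrounding results links them; the paper's one-line ``proof follows from Remark~\ref{32}'' is not a proof, and your plan cannot be completed.
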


\begin{theorem}\label{34} If a GBT-space  $ (X, \mu_1, \mu_2) $ is pairwise $ T_\frac{1}{2} $, then each singleton is either $ \mu_i $-open or $ \mu_j $-closed, $ i,j=1,2 $. The proof can be deduced from Remark \ref{32} .\end{theorem}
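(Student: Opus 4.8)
The plan is to read the conclusion off directly from Remark \ref{32}, so essentially no new argument is required; the theorem is just the forward implication of that characterization, restated index-pair by index-pair. First I would fix an arbitrary point $x\in X$ and assume that $(X,\mu_1,\mu_2)$ is pairwise $T_\frac{1}{2}$. By Remark \ref{32} (which itself merely packages Theorems \ref{28} and \ref{30}), this hypothesis is equivalent to the simultaneous validity, for every $x\in X$, of (i) ``$\{x\}$ is either $\mu_1$-open or $\mu_2$-closed'' and (ii) ``$\{x\}$ is either $\mu_2$-open or $\mu_1$-closed''. Then I would simply observe that (i) is precisely the assertion of the present theorem for the ordered pair $(i,j)=(1,2)$, while (ii) is that assertion for $(i,j)=(2,1)$; since these are the only two admissible choices with $i\neq j$, the conjunction of (i) and (ii) is exactly the claim that each singleton is either $\mu_i$-open or $\mu_j$-closed for $i,j=1,2$, and the proof is finished.

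If one prefers not to quote Remark \ref{32}, the same conclusion can be extracted straight from Theorem \ref{28}: that theorem says $(X,\mu_1,\mu_2)$ is pairwise $T_\frac{1}{2}$ if and only if, for every singleton $\{x\}$, the implication ``$\{x\}$ not $\mu_j$-closed $\Rightarrow$ $\{x\}$ is $\mu_i$-open'' holds, i.e. ``$\{x\}$ is $\mu_j$-closed or $\mu_i$-open'', and this must be read for both $(i,j)=(1,2)$ and $(i,j)=(2,1)$ under the standing indexing convention. Rewriting each of those two implications in disjunctive form recovers exactly conditions (i) and (ii), hence the theorem.

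The only point needing any care — and it is bookkeeping rather than mathematics — is to be explicit that the index symbols $i,j$ in the statement are universally quantified over the two pairs with $i\neq j$, so that \emph{both} disjunctions are being asserted, matching the way the indices were used in Remark \ref{32} and Theorems \ref{28}, \ref{30}. Once that reading is acknowledged there is no obstacle. I would not expect to need any of the examples in this section, since no sharpness or converse claim is being made here; indeed the converse fails in general, which is implicitly recorded by the fact that Theorem \ref{34} is stated only as a one-way implication while Theorem \ref{33} gives the genuine characterization.
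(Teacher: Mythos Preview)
Your approach---read the conclusion off from Remark~\ref{32}---is exactly what the paper does, so there is no divergence in method. However, your parsing of the statement is internally inconsistent and does not match the paper's intent. You read ``$i,j=1,2$'' as universally quantified over the two ordered pairs with $i\neq j$, and then identify the conclusion with the conjunction (i)$\wedge$(ii) of Remark~\ref{32}. But by that very remark, (i)$\wedge$(ii) is \emph{equivalent} to pairwise $T_{\frac{1}{2}}$, so under your reading the converse of Theorem~\ref{34} would automatically hold---contradicting both Example~\ref{35} and your own closing remark that the converse fails. Note also that the paper writes ``$i,j=1,2$'' here with no ``$i\neq j$'' clause, whereas elsewhere (e.g.\ Theorem~\ref{30}) it writes ``$i,j=1,2;\ i\neq j$'' when that restriction is intended.

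The reading consistent with Example~\ref{35} is existential: for each singleton $\{x\}$ there exist $i,j\in\{1,2\}$ such that $\{x\}$ is $\mu_i$-open or $\mu_j$-closed (equivalently, $\{x\}$ is $\mu_1$-open or $\mu_2$-open or $\mu_1$-closed or $\mu_2$-closed). In Example~\ref{35} this holds---$\{a\}$ is $\mu_1$-open, $\{b\}$ is $\mu_1$-closed, $\{c\}$ is $\mu_1$-open---while the space is not pairwise $T_{\frac12}$, so the converse genuinely fails. Under this correct reading the forward implication is even more immediate: condition~(i) of Remark~\ref{32} alone already gives the existential disjunction for every singleton. So your invocation of Remark~\ref{32} is fine; just drop the claim that the conclusion coincides with (i)$\wedge$(ii) and the unwarranted ``$i\neq j$'' restriction.
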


But the converse of Theorem \ref{34} may not be  true as seen from the Example \ref{35}. 
 
\begin{example}\label{35} Suppose  $ X = \{a, b, c\},  \mu_1=\{\emptyset, \{a\}, \{c\}, \{a, c\}\}$ and
$\mu_2=\{\emptyset, \{b\},  \{a, b\} 
\}$.  Then $ (X, \mu_1, \mu_2) $ is a generalized bitopological space but not a bitopological space. Here each singleton is either $ \mu_i $-open or  $ \mu_j $-closed. Now the subset $ \{b\} $ is $ g_{\mu_2} $-closed with respect to $ \mu_1 $, but it is not $ \mu_2 $-closed. Also $ \{a, c\} $ is $ g_{\mu_1} $-closed with respect to $ \mu_2 $, but it is not $ \mu_1 $-closed. So the GBT-space is not pairwise $ T_\frac{1}{2}$. Incidentally, it is pairwise $ T_1 $.
\end{example}

\begin{remark} In a GBT-space, it is  shown by the Examples \ref{35} and \ref{36} that there may not be a relation between pairwise $ T_1 $ and pairwise $ T_\frac{1}{2} $.\end{remark}

\begin{example}\label{36} Suppose  $ X = \{a, b, c, d\},  \mu_1=\{\emptyset, X, \{a\}, \{b\}, \{a, b\}, \{a, c, d\}, \{a, b, d\}\}$ and
$\mu_2=\{\emptyset, X, \{a\}, \{d\}, \{a, d\}, \{a, b, c\}$, 
$ \{a, b, d\}\}$.  Then $ (X, \mu_1, \mu_2) $ is a generalized bitopological space but not a bitopological space.  Clearly $ (X, \mu_1, \mu_2) $ is pairwise $ T_\frac{1}{2} $ by Theoem \ref{32}. But it is not pairwise $ T_1 $, since the pair of points $ a, c\in X, a\not=c $ contradict the definition of pairwise $ T_1 $.
\end{example}

\begin{remark}\label{37} From the Theorem \ref{15} and Remark \ref{32}, it follows that pairwise $ T_\frac{1}{2}$ implies pairwise $ T_0 $. But the converse may not be true as seen from the Example \ref{11}. \end{remark}

\section{\bf $ \lambda_{\mu_i}$-closed sets with respect to $ \mu_j $ and Pairwise $ \lambda $-closed sets}

\begin{definition} \label{38} A set $A$ in a  GBT-space $ (X, \mu_1, \mu_2) $ is said to be $ \lambda_{\mu_i}$-closed with respect to $ \mu_j $  if $ A=F_i\cap L_j $  where $F_i$ is a $ \mu_i $-closed set and $L_j$ is a $ \wedge_{\mu_j} $-set, $ i,j=1,2$ and $ i\not=j $. $A$  is said to be $ \lambda_{\mu_i}$-open with respect to $ \mu_j $ if $ X- A $ is  $ \lambda_{\mu_i}$-closed with respect to $ \mu_j $ or equivalently, $ A=V_i\cup M_j $, where $ V_i $ is a ${\mu_i}$-open set and $ M_j $ is a $ \vee_{\mu_j} $-set. \end{definition}

In a  GBT-space $ (X, \mu_1, \mu_2) $, it is clear that every $ \wedge_{\mu_j} $-set is $ \lambda_{\mu_i}$-closed with respect to $ \mu_j $ (taking $ X $ as a closed set) and every $ \mu_i $-closed set is $ \lambda_{\mu_i}$-closed with respect to $ \mu_j, i, j=1, 2; i\not=j $. But the converses are not always true as shown in the Example \ref{39}.
\begin{example}\label{39} 
Suppose  $ X = \{a, b, c\},  \mu_1=\{\emptyset, \{a\}\}$ and
$\mu_2=\{\emptyset, \{a, b\}\}$.  Then $ (X, \mu_1, \mu_2) $ is a generalized bitopological space but not a bitopological space. Now suppose $ B=\{b\} $. Then $ B $ is $ \lambda_{\mu_1}$-closed with respect to $ \mu_2 $. But $ B $ is not $ \wedge_{\mu_2} $-set since $ \{b\}\not=\{b\}_{\mu_2}^\wedge $. Also $ B $ is not $ \mu_1 $-closed.
\end{example}

\begin{theorem}\label{40} In a GBT-space, arbitrary union of $ \lambda_{\mu_i}$-open sets with respect to $ \mu_j $ is $ \lambda_{\mu_i}$-open, $ i, j=1, 2; i\not=j $.\end{theorem}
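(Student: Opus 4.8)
The plan is to work directly from Definition \ref{38}: a set is $\lambda_{\mu_i}$-open with respect to $\mu_j$ exactly when it can be written as $V_i \cup M_j$ with $V_i$ a $\mu_i$-open set and $M_j$ a $\vee_{\mu_j}$-set. So I would take an arbitrary family $\{A_\alpha\}_{\alpha\in\Lambda}$ of $\lambda_{\mu_i}$-open sets with respect to $\mu_j$, write each $A_\alpha = V_\alpha \cup M_\alpha$ with $V_\alpha$ $\mu_i$-open and $M_\alpha$ a $\vee_{\mu_j}$-set, and then regroup the union as
\[
\bigcup_{\alpha\in\Lambda} A_\alpha = \Bigl(\bigcup_{\alpha\in\Lambda} V_\alpha\Bigr)\cup\Bigl(\bigcup_{\alpha\in\Lambda} M_\alpha\Bigr).
\]
The first bracket is a union of $\mu_i$-open sets, hence $\mu_i$-open because $\mu_i$ is a generalized topology (closed under arbitrary unions). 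So the only real content is to show that the second bracket, an arbitrary union of $\vee_{\mu_j}$-sets, is again a $\vee_{\mu_j}$-set.

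For that sub-claim I would argue as follows. By Definition \ref{6}, $M$ is a $\vee_{\mu_j}$-set iff $M = M_{\mu_j}^\vee$, and in general $M_{\mu_j}^\vee = \bigcup\{F : X-F\in\mu_j,\ F\subset M\}$ is the largest $\mu_j$-closed set contained in $M$ — equivalently, the union of all $\mu_j$-closed subsets of $M$. Hence a $\vee_{\mu_j}$-set is precisely a union of $\mu_j$-closed sets (dually to how $\mu_j$-open sets are unions of... well, they are themselves the open sets, but $\mu_j$-closed sets are closed under arbitrary unions here because complements of $\mu_j$-open sets need only satisfy closure under arbitrary \emph{intersections} — wait, that is the subtle point). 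Let me reconsider: in a GT-space, $\mu_j$ is closed under arbitrary unions, so $\mu_j$-closed sets are closed under arbitrary \emph{intersections}, not unions. So I cannot say "a union of $\mu_j$-closed sets is $\mu_j$-closed." Instead I would use the characterization via $\vee$: each $M_\alpha = (M_\alpha)_{\mu_j}^\vee \subset \bigcup_\beta M_\beta$, and then by Lemma \ref{7}(5) (monotonicity of $\vee$) together with Lemma \ref{7}(2), $M_\alpha = (M_\alpha)_{\mu_j}^\vee \subset \bigl(\bigcup_\beta M_\beta\bigr)_{\mu_j}^\vee$. Taking the union over $\alpha$ gives $\bigcup_\alpha M_\alpha \subset \bigl(\bigcup_\beta M_\beta\bigr)_{\mu_j}^\vee$, and the reverse inclusion is Lemma \ref{7}(2). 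Hence $\bigcup_\alpha M_\alpha$ is a $\vee_{\mu_j}$-set.

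Combining, $\bigcup_\alpha A_\alpha = U \cup M$ where $U := \bigcup_\alpha V_\alpha$ is $\mu_i$-open and $M := \bigcup_\alpha M_\alpha$ is a $\vee_{\mu_j}$-set, which is exactly the form required by Definition \ref{38} for $\bigcup_\alpha A_\alpha$ to be $\lambda_{\mu_i}$-open with respect to $\mu_j$. The main obstacle, as the false-start above illustrates, is resisting the temptation to treat $\mu_j$-closed sets as closed under arbitrary unions; the correct route is entirely through the monotonicity and idempotence properties of $(\cdot)_{\mu_j}^\vee$ recorded in Lemma \ref{7}, which do hold in this generality. One should also note the degenerate case where some $M_\alpha = \emptyset$ (the default when $M_\alpha$ contains no $\mu_j$-closed set), which causes no trouble since $\emptyset$ is trivially a $\vee_{\mu_j}$-set by Lemma \ref{7}(1) and drops out of the union harmlessly.
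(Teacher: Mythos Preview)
Your proof is correct and follows exactly the same strategy as the paper's: decompose each $A_\alpha = V_\alpha \cup M_\alpha$, regroup the union, and use that arbitrary unions of $\mu_i$-open sets are $\mu_i$-open while arbitrary unions of $\vee_{\mu_j}$-sets are $\vee_{\mu_j}$-sets. The only difference is that the paper simply cites \cite{MS} for the closure of $\vee_{\mu_j}$-sets under arbitrary unions, whereas you supply the argument explicitly via Lemma~\ref{7}(2),(5); your self-contained verification is a welcome addition.
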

\begin{proof}
Suppose $ A_1,A_2,A_3,...$ are $ \lambda_{\mu_i}$-open sets with respect to $ \mu_j $ and $ \cup_{k} A_k=A $. So let $ A_k=U_{k_i}\cup M_{k_j}$ where $ U_{k_i} $'s are $ \mu_i $-open sets and $ M_{k_j}$'s are $ \vee _{\mu_j}  $-sets. Now $ A = \bigcup_{k} A_k=\bigcup_{k}(U_{k_i}\cup M_{k_j})=(\cup_{k}U_{k_i})\bigcup (\cup _{k}M_{k_j}) $. Clearly $\bigcup _{k} U_{k_i}$ is $ \mu_i $-open set and $\bigcup _{k} M_{k_j}$ is $ \vee_{\mu_j} $-set \cite{MS}. So $ A $ is $ \lambda_{\mu_i}$-open set with respect to $ \mu_j $.
\end{proof}

\begin{remark}\label{41}
It is observed in \cite{MS} that the collection of all $ \vee_{\mu_j} $-sets in a GT-space $ (X, \mu_j) $ forms a genaralized topology $ \kappa_j $, say. Also in a GBT-space $ (X, \mu_1, \mu_2) $,  the collection of all $ \lambda_{\mu_i}$-open sets with respect to $ \mu_j $ forms a genaralized topology containing $ (\mu_i\cup \kappa_j) $. $ i, j=1, 2; i\not=j $ which is  proved below in Corollary \ref{42}.
\end{remark}
\begin{corollary}\label{42} 
The collection of all $ \lambda_{\mu_i}$-open sets with respect to $ \mu_j $ in a GBT-space $ (X, \mu_1, \mu_2) $ forms a genaralized topology containing $ (\mu_i\cup \kappa_j) $. $ i, j=1, 2; i\not=j $. 
\end{corollary}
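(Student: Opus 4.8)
The plan is to verify directly the two defining requirements of a generalized topology for the family
$\Lambda := \{A\subset X : A \text{ is } \lambda_{\mu_i}\text{-open with respect to } \mu_j\}$, namely that $\emptyset\in\Lambda$ and that $\Lambda$ is closed under arbitrary unions, and then to exhibit $\mu_i$ and $\kappa_j$ as sub-families of $\Lambda$.

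First I would dispose of the empty set. By Lemma \ref{7}(1) we have $\emptyset_{\mu_j}^{\vee}=\emptyset$, so $\emptyset$ is a $\vee_{\mu_j}$-set; since $\emptyset$ is also $\mu_i$-open, the decomposition $\emptyset=\emptyset\cup\emptyset$ (an $\mu_i$-open set union a $\vee_{\mu_j}$-set) shows $\emptyset$ is $\lambda_{\mu_i}$-open with respect to $\mu_j$, i.e. $\emptyset\in\Lambda$. Closure of $\Lambda$ under arbitrary unions is exactly the content of Theorem \ref{40}. These two facts together say that $\Lambda$ is a generalized topology on $X$.

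Next I would prove the containment $\mu_i\cup\kappa_j\subset\Lambda$. If $V$ is any $\mu_i$-open set, then $V=V\cup\emptyset$, a union of an $\mu_i$-open set and the $\vee_{\mu_j}$-set $\emptyset$, so $V\in\Lambda$; hence $\mu_i\subset\Lambda$. If $M$ is any $\vee_{\mu_j}$-set, i.e. $M\in\kappa_j$ (recall from Remark \ref{41} that $\kappa_j$ is precisely the family of $\vee_{\mu_j}$-sets), then $M=\emptyset\cup M$, a union of the $\mu_i$-open set $\emptyset$ and a $\vee_{\mu_j}$-set, so $M\in\Lambda$; hence $\kappa_j\subset\Lambda$. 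Combining, $\mu_i\cup\kappa_j\subset\Lambda$, which completes the argument.

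There is essentially no serious obstacle here: the result is a direct consequence of Theorem \ref{40} together with the trivial observation that $\emptyset$ serves simultaneously as an $\mu_i$-open set and, via Lemma \ref{7}(1), as a $\vee_{\mu_j}$-set. The only point deserving a moment's care is the identification of $\kappa_j$ with the family of $\vee_{\mu_j}$-sets, which is already recorded (citing \cite{MS}) in Remark \ref{41}, so that the containment $\kappa_j\subset\Lambda$ is meaningful as stated.
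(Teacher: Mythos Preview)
Your proposal is correct and follows essentially the same approach as the paper's proof, which simply notes that $\emptyset$ is both a $\mu_i$-open set and a $\vee_{\mu_j}$-set and invokes Theorem \ref{40}. You have merely spelled out in full the containments $\mu_i\subset\Lambda$ and $\kappa_j\subset\Lambda$ that the paper leaves implicit.
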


 Since  $ \emptyset $ is a $ \mu_i $-open set and  also a $ \vee_{\mu_j} $-set, proof follows from the Theorem \ref{40}.\\
\begin{lemma}\label{43} (c.f. \cite{JP})    
A set $A$ of a GBT-space  $ (X,  \mu_1, \mu_2)$ the following are equivalent: 

(i)  $A$ is $ \lambda_{\mu_i}$-closed with respect to $ \mu_j $, $ i, j=1, 2; i\not=j $.
(ii) $ A=P\cap A_{\mu_j}^\wedge, P $ is a $ \mu_i $-closed subset of $ X$, $ i, j=1, 2; i\not=j $.
(iii)  $ A= \overline{A_{\mu_i}}\cap L_j, L_j$ is a $ \wedge_{\mu_j} $-set, $ i, j=1, 2; i\not=j $.
(iv)  $ A=\overline{A_{\mu_i}}\cap A_{\mu_j}^\wedge $, $ i, j=1, 2; i\not=j $. 
\end{lemma}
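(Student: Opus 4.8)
The plan is to establish a cycle of implications (i) $\Rightarrow$ (ii) $\Rightarrow$ (iii) $\Rightarrow$ (iv) $\Rightarrow$ (i), using at each stage the basic monotonicity and idempotence facts about the operators $(\cdot)_{\mu_j}^\wedge$ and $\overline{(\cdot)_{\mu_i}}$ recorded in Lemma \ref{7}, together with the elementary observation that $A_{\mu_j}^\wedge$ is itself a $\wedge_{\mu_j}$-set (Note \ref{10}) and $\overline{A_{\mu_i}}$ is $\mu_i$-closed. The key mechanism throughout is the following: if $A = F_i \cap L_j$ with $F_i$ a $\mu_i$-closed set and $L_j$ a $\wedge_{\mu_j}$-set, then since $A \subset F_i$ we get $\overline{A_{\mu_i}} \subset F_i$, and since $A \subset L_j$ we get $A_{\mu_j}^\wedge \subset L_j^\wedge = L_j$; hence $A \subset \overline{A_{\mu_i}} \cap A_{\mu_j}^\wedge \subset F_i \cap L_j = A$, forcing equality. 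This single squeeze argument essentially collapses any representation down to the canonical one in (iv).

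First I would prove (i) $\Rightarrow$ (iv): given $A = F_i \cap L_j$ as in Definition \ref{38}, run the squeeze argument above to conclude $A = \overline{A_{\mu_i}} \cap A_{\mu_j}^\wedge$. Then (iv) $\Rightarrow$ (iii) is immediate, taking $L_j = A_{\mu_j}^\wedge$, which is a $\wedge_{\mu_j}$-set by Note \ref{10}. Next, (iii) $\Rightarrow$ (ii): if $A = \overline{A_{\mu_i}} \cap L_j$ with $L_j$ a $\wedge_{\mu_j}$-set, then $A \subset L_j$ gives $A_{\mu_j}^\wedge \subset L_j$, while $A = \overline{A_{\mu_i}} \cap L_j \subset \overline{A_{\mu_i}} \cap A_{\mu_j}^\wedge \subset \overline{A_{\mu_i}} \cap L_j = A$ (using $A \subset A_{\mu_j}^\wedge$ from Lemma \ref{7}(2)); thus $A = \overline{A_{\mu_i}} \cap A_{\mu_j}^\wedge = P \cap A_{\mu_j}^\wedge$ with $P = \overline{A_{\mu_i}}$ a $\mu_i$-closed set, which is the form (ii). Finally (ii) $\Rightarrow$ (i) is trivial since $A_{\mu_j}^\wedge$ is a $\wedge_{\mu_j}$-set, so the representation $A = P \cap A_{\mu_j}^\wedge$ is already of the form required in Definition \ref{38}.

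I do not anticipate a serious obstacle here; the content is entirely formal, and the only thing to be careful about is keeping the direction of the inclusions straight when passing between a hypothesized representation and the canonical operators — in particular remembering that $(\cdot)_{\mu_j}^\wedge$ is monotone and idempotent (Lemma \ref{7}(3),(4)) so that $A \subset L_j$ with $L_j$ a $\wedge_{\mu_j}$-set genuinely yields $A_{\mu_j}^\wedge \subset L_j$, and dually that $\mu_i$-closedness of $F_i$ with $A \subset F_i$ yields $\overline{A_{\mu_i}} \subset F_i$. Once that bookkeeping is in place the chain of implications closes with no computation beyond set-theoretic manipulation, and the lemma follows.
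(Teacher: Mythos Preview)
Your argument is correct; the paper itself omits the proof entirely (``The proof is simple, so is omitted''), so there is nothing to compare against. Two cosmetic points: your stated plan cycle (i)$\Rightarrow$(ii)$\Rightarrow$(iii)$\Rightarrow$(iv)$\Rightarrow$(i) is the reverse of what you actually execute, and in the (iii)$\Rightarrow$(ii) step the chain $A = \overline{A_{\mu_i}}\cap L_j \subset \overline{A_{\mu_i}}\cap A_{\mu_j}^\wedge$ reads oddly---the inclusion is justified for $A$ (via $A\subset \overline{A_{\mu_i}}$ and $A\subset A_{\mu_j}^\wedge$), not literally for $\overline{A_{\mu_i}}\cap L_j$ via the containment $A_{\mu_j}^\wedge\subset L_j$---but the underlying squeeze is sound.
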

The proof is simple, so is omitted.\\
In a  GBT-space, it is shown by citing Examples \ref{43A} and \ref{43B} that there may not exist a relation between $ \lambda_{\mu_i}$-closed sets  and $ g_{\mu_i}$-closed sets with respect to $ \mu_j, i, j=1, 2; i\not=j $.
\begin{example} \label{43A} 
$ g_{\mu_i}$-closed set with respect to $ \mu_j $ which is not $ \lambda_{\mu_i}$-closed with respect to $ \mu_j $.
Suppose  $ X = \{a, b, c, d\},  \mu_1=\{\emptyset, \{a\},  \{a, d\}\}$ and $\mu_2=\{\emptyset, \{b\}, \{b, d\} \}$.  Then $ (X, \mu_1, \mu_2) $ is a generalized bitopological space but not a bitopological space. Take $ C=\{c\} $, then $ C $ is $ g_{\mu_1}$-closed with respect to $ \mu_2 $ but not $ \lambda_{\mu_1}$-closed with respect to $ \mu_2 $ since $ C\not=\overline{C_{\mu_1}}\cap C_{\mu_2}^\wedge $. Again let $ B=\{b\} $, then $ B $ is  $ g_{\mu_2}$-closed with respect to $ \mu_1 $  but not $ \lambda_{\mu_2}$-closed with respect to $ \mu_1 $, since $ B\not=\overline{B_{\mu_2}}\cap B_{\mu_1}^\wedge $. 
\end{example}

\begin{example}\label{43B} $ \lambda_{\mu_i}$-closed with respect to $ \mu_j $ which is not $ g_{\mu_i}$-closed  with respect to $ \mu_j $.

Suppose  $ X = \{a, b, c, d\},  \mu_1=\{\emptyset, \{a\},  \{a, d\}\}$ and
$\mu_2=\{\emptyset, \{a, b\}, \{c\}, \{a, b, c\}\}$.  Then $ (X, \mu_1, \mu_2) $ is a generalized bitopological space but not a bitopological space. Suppose  $ A=\{a\} $. Then $ A $ is $ \lambda_{\mu_2}$-closed with respect to $ \mu_1 $ but not $ g_{\mu_2}$-closed  with respect to $ \mu_1 $ since $ \overline{A_{\mu_2}}\not\subset A_{\mu_1}^\wedge $. Again let $ B=\{b\} $. Then $ B $ is  $ \lambda_{\mu_1}$-closed with respect to $ \mu_2 $  but $ B $ is not $ g_{\mu_1}$-closed  with respect to $ \mu_2 $ since $ \overline{B_{\mu_1}}\not\subset B_{\mu_2}^\wedge $. 
\end{example}
\begin{definition}\label{44} 
In a  GBT-space $ (X, \mu_1, \mu_2) $, a set $A$ of  $ X $  is said to be pairwise $ \lambda $-closed  if $ A=(F_1\cap F_2)\cap (L_1\cap L_2) $  where $F_1, F_2$ are respectively  $ \mu_1,\mu_2 $-closed sets and $L_1,L_2$ are respectively $ \wedge_{\mu_1}, \wedge_{\mu_2} $-sets.  $A$ is said to be pairwise $ \lambda $-open if $ X-A $ is pairwise $ \lambda $-closed  or equivalently, $ A=(U_1\cup U_2)\cup ( M_1\cup M_2) $  where $U_1, U_2$ are respectively  $ \mu_1,\mu_2 $-open sets and $M_1,M_2$ are respectively $ \vee_{\mu_1}, \vee_{\mu_2} $-sets.
\end{definition}

\begin{lemma} \label{45}  For a set
 $A$ of a GBT-space  $ (X,  \mu_1, \mu_2) $  
the following are equivalent: 

(i)  $A$ is pairwise $ \lambda $-closed.

 (ii) $ A= (F_1\cap F_2)\cap  (A_{\mu_1}^\wedge \cap A_{\mu_2}^\wedge)$, where $ F_1, F_2 $ are respectively $ \mu_1, \mu_2 $-closed sets of $ X$.

(iii)  $ A= ( \overline{A_{\mu_1}}\cap \overline{A_{\mu_2}})\cap (L_1\cap L_2) $ where  $ L_1, L_2 $ are respectively $ \wedge_{\mu_1}, \wedge_{\mu_2} $-sets.

(iv)  $ A=(\overline{A_{\mu_1}}\cap \overline{A_{\mu_2}}) \cap (   A_{\mu_1}^\wedge\cap A_{\mu_2}^\wedge) $. 
\end{lemma}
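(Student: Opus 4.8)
The plan is to prove the chain of equivalences (i) $\Rightarrow$ (ii) $\Rightarrow$ (iii) $\Rightarrow$ (iv) $\Rightarrow$ (i), mirroring the structure of Lemma \ref{43} but now working with the intersection of two $\mu_i$-closed pieces and two $\wedge_{\mu_j}$-set pieces simultaneously. The key observation driving everything is that for a pairwise $\lambda$-closed set $A = (F_1 \cap F_2) \cap (L_1 \cap L_2)$ we always have the sandwich $A \subset \overline{A_{\mu_1}} \subset F_1$, $A \subset \overline{A_{\mu_2}} \subset F_2$ (since each $F_i$ is $\mu_i$-closed and contains $A$), and likewise $A \subset A_{\mu_j}^\wedge \subset L_j$ (since each $L_j$ is a $\wedge_{\mu_j}$-set containing $A$, and $A_{\mu_j}^\wedge$ is the smallest such by Definition \ref{6} and Lemma \ref{7}(4)). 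These two facts let us shrink the witnessing sets down to the canonical closures/hulls without leaving $A$ unchanged.

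First I would establish (i) $\Rightarrow$ (ii): given $A = (F_1 \cap F_2) \cap (L_1 \cap L_2)$, I replace $L_1 \cap L_2$ by the possibly smaller set $A_{\mu_1}^\wedge \cap A_{\mu_2}^\wedge$. Since $A \subset L_j$ and $L_j$ is a $\wedge_{\mu_j}$-set, $A_{\mu_j}^\wedge \subset L_j$ by Lemma \ref{7}(4) and Lemma \ref{7}(3); conversely $A \subset A_{\mu_j}^\wedge$ by Lemma \ref{7}(2). Hence $A \subset (F_1 \cap F_2) \cap (A_{\mu_1}^\wedge \cap A_{\mu_2}^\wedge) \subset (F_1 \cap F_2) \cap (L_1 \cap L_2) = A$, forcing equality, which is exactly (ii). Next, for (ii) $\Rightarrow$ (iii), I run the symmetric argument on the closed factors: from $A = (F_1 \cap F_2) \cap (A_{\mu_1}^\wedge \cap A_{\mu_2}^\wedge)$ and $A \subset F_i$ with $F_i$ being $\mu_i$-closed, we get $\overline{A_{\mu_i}} \subset F_i$, while $A \subset \overline{A_{\mu_i}}$ always holds, so replacing $F_1 \cap F_2$ by $\overline{A_{\mu_1}} \cap \overline{A_{\mu_2}}$ leaves $A$ unchanged; noting that each $\overline{A_{\mu_i}}$ is $\mu_i$-closed and that $A_{\mu_1}^\wedge \cap A_{\mu_2}^\wedge$ is a particular choice of $L_1 \cap L_2$ (each $A_{\mu_j}^\wedge$ being a $\wedge_{\mu_j}$-set by Note \ref{10}), we obtain a representation of the form in (iii).

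For (iii) $\Rightarrow$ (iv), I simply rerun the first shrinking step on the $\wedge$-factors of (iii): with $A = (\overline{A_{\mu_1}} \cap \overline{A_{\mu_2}}) \cap (L_1 \cap L_2)$ and $A \subset L_j$, substitute $A_{\mu_j}^\wedge$ for $L_j$ using $A \subset A_{\mu_j}^\wedge \subset L_j$, yielding $A = (\overline{A_{\mu_1}} \cap \overline{A_{\mu_2}}) \cap (A_{\mu_1}^\wedge \cap A_{\mu_2}^\wedge)$, which is (iv). Finally (iv) $\Rightarrow$ (i) is immediate from Definition \ref{44}, taking $F_i = \overline{A_{\mu_i}}$ (a $\mu_i$-closed set) and $L_j = A_{\mu_j}^\wedge$ (a $\wedge_{\mu_j}$-set by Note \ref{10}). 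I do not anticipate any genuine obstacle here; the only point requiring a little care is keeping track of which index ($\mu_1$ versus $\mu_2$) each inclusion uses and invoking the correct part of Lemma \ref{7}, but this is bookkeeping rather than a conceptual difficulty — the whole argument is essentially two applications each of the "closure is the smallest closed superset" and "$\wedge$-hull is the smallest $\wedge$-set superset" principles, run independently on the two topologies.
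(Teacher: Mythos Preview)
Your proof is correct and complete. The paper itself does not give a proof of this lemma beyond the remark ``The proof can be easily verified,'' so your detailed chain (i) $\Rightarrow$ (ii) $\Rightarrow$ (iii) $\Rightarrow$ (iv) $\Rightarrow$ (i), based on the sandwich inclusions $A \subset \overline{A_{\mu_i}} \subset F_i$ and $A \subset A_{\mu_j}^\wedge \subset L_j$, is precisely the routine verification the authors leave to the reader.
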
 
 
 The proof can be easily verified.
 \begin{remark}\label{46} 
From Lemma \ref{43} (iv) we can say that in a GBT-space $ (X,  \mu_1, \mu_2) $, a set $ A $ is $ \lambda_{\mu_i}$-closed with respect to $ \mu_j $ if $ A $ can be expressed as the intersection of all $ \mu_i $-closed sets and  $ \mu_j $-open sets containing it $(i,j=1,2; i\not=j)$. Also from Lemma \ref{45} (iv) we can say that $ A $ is pairwise $ \lambda $-closed  if $ A $ can be expressed as the intersection of all $ \mu_1 $-closed sets, $ \mu_2 $-closed sets and $ \mu_1 $-open sets, $ \mu_2 $-open sets containing it.
\end{remark}

Clearly in view of Lemma \ref{43} (iv), if a set $ A $ of $ (X, \mu_1, \mu_2) $ is $ \lambda_{\mu_1} $ or $ \lambda_{\mu_2} $ closed with respect to $ \mu_2 $ or $ \mu_1$ respectively  then it is pairwise $ \lambda $-closed, but the converse may not be true as seen from the Example \ref{46A}.

\begin{example} \label{46A}
Let  $ X = \{a, b, c, d\},  \mu_1=\{\emptyset, \{a, d\}, \{b, d\}, \{a, b, d\}\}$ and $\mu_2=\{\emptyset, \{a, b, c\}\}$.  Then $ (X, \mu_1, \mu_2) $ is a generalized bitopological space but not a bitopological space. Here $ \{a\} $ is neither $ \lambda_{\mu_1} $-closed with respect to $ \mu_2 $ nor $ \lambda_{\mu_2} $ closed with respect to  $ \mu_1$  but it is pairwise $ \lambda $-closed by Lemma \ref{45} (iv).
\end{example}

In a GBT-space $ (X,  \mu_1, \mu_2) $, we show by the  Example \ref{46B} that union of two $ \lambda_{\mu_i} $-closed sets with respect to $ \mu_j $ may not be  $ \lambda_{\mu_i} $-closed, $ i, j=1, 2; i\not=j $ and union of two pairwise $ \lambda $-closed sets may not be pairwise $ \lambda $-closed.

\begin{example} \label{46B}
Let  $ X = \{a, b, c, d\},  \mu_1=\{\emptyset, \{a\},  \{a, d, c\}\}$ and
$\mu_2=\{\emptyset, \{a, b, c\}\}$.  Then $ (X, \mu_1, \mu_2) $ is a generalized bitopological space but not a bitopological space. Take  $A= \{a\}$ and $B= \{d\} $, then $ A, B $ are $ \lambda_{\mu_2}$-closed sets with respect to $ \mu_1 $. But  $ A \cup  B=\{a,d\} $ is not $ \lambda_{\mu_2}$-closed with respect to $ \mu_1 $. Again we can verify that  $ A,B $ are also pairwise $ \lambda $-closed sets. But $ A\cup B = \{a,d\}$  is not pairwise $ \lambda $-closed. \end{example}

\begin{note}\label{47} In a GBT-space $ (X,  \mu_1, \mu_2) $, likewise Theorem \ref{40} it can be shown that   arbitrary union of pairwise $ \lambda $-open sets is pairwise $ \lambda $-open. So we can say that the collection of all pairwise $ \lambda $-open sets  forms a generalized topology on $ X$. \end{note}

\begin{theorem}\label{48} In a GBT-space $ (X, \mu_1, \mu_2) $, a set $ A $ of  $ X $ is $ \mu_i $-closed if and only if $ A $ is both $ g_{\mu_i} $-closed  and $ \lambda_{\mu_i} $-closed with respect to $ \mu_j $, $i, j=1, 2; i\not=j $. \end{theorem}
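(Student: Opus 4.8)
The plan is to dispatch the forward implication from facts already recorded in the excerpt, and to obtain the converse by simply combining the two characterizations already available — Remark \ref{9} for $g_{\mu_i}$-closedness and Lemma \ref{43}(iv) for $\lambda_{\mu_i}$-closedness — after which the conclusion is a one-line set-theoretic simplification.

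First I would treat the easy direction. Suppose $A$ is $\mu_i$-closed. Then, by Remark \ref{9}, every $\mu_i$-closed set is $g_{\mu_i}$-closed with respect to $\mu_j$; and, by the observation immediately following Definition \ref{38}, every $\mu_i$-closed set is $\lambda_{\mu_i}$-closed with respect to $\mu_j$ (write $A = A \cap X$, noting $X$ is a $\wedge_{\mu_j}$-set). Hence $A$ is both $g_{\mu_i}$-closed and $\lambda_{\mu_i}$-closed with respect to $\mu_j$.

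For the converse, assume $A$ is both $g_{\mu_i}$-closed and $\lambda_{\mu_i}$-closed with respect to $\mu_j$. From the $g_{\mu_i}$-closedness, Remark \ref{9} yields $\overline{A_{\mu_i}} \subset A_{\mu_j}^\wedge$. From the $\lambda_{\mu_i}$-closedness, Lemma \ref{43}(iv) yields $A = \overline{A_{\mu_i}} \cap A_{\mu_j}^\wedge$. Substituting the inclusion into this equality gives $\overline{A_{\mu_i}} \cap A_{\mu_j}^\wedge = \overline{A_{\mu_i}}$, so $A = \overline{A_{\mu_i}}$, i.e.\ $A$ is $\mu_i$-closed.

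There is no genuine obstacle here; the only point requiring a little care is bookkeeping with the indices, namely ensuring that Remark \ref{9} and Lemma \ref{43}(iv) are invoked for the same pair $(i,j)$ with $i\neq j$, so that the inclusion and the equality refer to the very same sets $\overline{A_{\mu_i}}$ and $A_{\mu_j}^\wedge$. Once that is observed, the equivalence follows immediately.
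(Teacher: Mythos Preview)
Your proof is correct and follows essentially the same approach as the paper: both directions hinge on Remark \ref{9} and Lemma \ref{43}(iv), combining the inclusion $\overline{A_{\mu_i}} \subset A_{\mu_j}^\wedge$ with the equality $A = \overline{A_{\mu_i}} \cap A_{\mu_j}^\wedge$ to conclude $A = \overline{A_{\mu_i}}$. Your write-up is slightly more explicit on the forward direction, but the argument is the same.
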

\begin{proof}
Obviously a $ \mu_i $-closed set is both $ g_{\mu_i} $-closed and $ \lambda_{\mu_i} $-closed with respect to $ \mu_j $. Conversely, suppose $ A $  is both $ g_{\mu_i} $-closed and $ \lambda_{\mu_i} $-closed with respect to $ \mu_j $. Then  by Remark \ref{9}, $ \overline{A_{\mu_i}}\subset A_{\mu_j}^\wedge $. But $ A $  is $ \lambda_{\mu_i} $-closed with respect to $ \mu_j $, then by Lemma \ref{43}, $ A= \overline{A_{\mu_i}}\cap  A_{\mu_j}^\wedge $. Therefore $ A= \overline{A_{\mu_i}}$, so $ A $  is $ \mu_i $-closed.  
\end{proof}

\begin{definition}\label{49} 
 $ A \subset (X,  \mu_1, \mu_2) $ is said to be $ \wedge_{\mu_1\mu_2} $-set if $ A =A_{\mu_1}^\wedge\cap A_{\mu_2}^\wedge$.
\end{definition}

\begin{note}\label{50} Clearly $ \wedge_{\mu_1\mu_2} $-set is  pairwise $ \lambda $-closed. But converse may not be true which is evident from Example \ref{17} taking the singleton $ \{c\} $ as the relevant set.\end{note}

\begin{theorem}\label{51} If $ (X,  \mu_1, \mu_2) $ is pairwise $ T_1 $, then every subset of $ X $ is $ \wedge_{\mu_1\mu_2} $-set.\end{theorem}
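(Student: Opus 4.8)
The plan is to show that in a pairwise $T_1$ space every singleton is a $\wedge_{\mu_1\mu_2}$-set, and then to use the fact (Lemma \ref{7}(4), applied to both $\mu_1$ and $\mu_2$, together with the stability of the $\wedge$-operator under intersection of the resulting sets) that the $\wedge_{\mu_1\mu_2}$-property is closed under arbitrary intersections; since an arbitrary subset $A$ of $X$ is the intersection of the complements of singletons it omits—wait, that is not quite it—rather I would argue directly that $A$ equals the intersection of all sets of the form $X\setminus\{y\}$ with $y\notin A$, and show each such set is a $\wedge_{\mu_1\mu_2}$-set. So the first step is: fix $A\subseteq X$ and $y\in X\setminus A$; by pairwise $T_1$ there exist $U\in\mu_1$ and $V\in\mu_2$ with the separation property for any chosen $x\in A$, in particular one obtains for each $x\in A$ either a $\mu_1$-open or $\mu_2$-open set containing $x$ but missing $y$. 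Hence $A\subseteq A_{\mu_1}^\wedge$ and $A\subseteq A_{\mu_2}^\wedge$ trivially, and I must prove the reverse inclusion $A_{\mu_1}^\wedge\cap A_{\mu_2}^\wedge\subseteq A$, i.e. that no point $y\notin A$ lies in both $A_{\mu_1}^\wedge$ and $A_{\mu_2}^\wedge$.

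The key step is this: take $y\notin A$. For each $x\in A$ we have $x\neq y$, so by pairwise $T_1$ there is $U_x\in\mu_1$ with $x\in U_x$, $y\notin U_x$, and there is $V_x\in\mu_2$ with $x\in V_x$, $y\notin V_x$ (the definition of pairwise $T_1$ in Definition \ref{3} gives both an $\mu_1$-open and an $\mu_2$-open separator simultaneously). Put $U=\bigcup_{x\in A}U_x$ and $V=\bigcup_{x\in A}V_x$; then $U\in\mu_1$, $V\in\mu_2$ (generalized topologies are closed under arbitrary unions), $A\subseteq U$, $A\subseteq V$, and $y\notin U$, $y\notin V$. Consequently $A_{\mu_1}^\wedge\subseteq U$ and $A_{\mu_2}^\wedge\subseteq V$, so $y\notin A_{\mu_1}^\wedge$ and a fortiori $y\notin A_{\mu_1}^\wedge\cap A_{\mu_2}^\wedge$. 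Since $y$ was an arbitrary point outside $A$, this gives $A_{\mu_1}^\wedge\cap A_{\mu_2}^\wedge\subseteq A$; combined with the trivial inclusion $A\subseteq A_{\mu_1}^\wedge\cap A_{\mu_2}^\wedge$ from Lemma \ref{7}(2), we conclude $A=A_{\mu_1}^\wedge\cap A_{\mu_2}^\wedge$, that is, $A$ is a $\wedge_{\mu_1\mu_2}$-set in the sense of Definition \ref{49}.

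The only subtlety—and the one place to be careful—is the degenerate case where there is no $\mu_1$-open (resp. $\mu_2$-open) set containing $A$, in which Definition \ref{6} sets $A_{\mu_1}^\wedge=X$ (resp. $A_{\mu_2}^\wedge=X$); but in a pairwise $T_1$ space the union $U=\bigcup_{x\in A}U_x$ constructed above is always a $\mu_1$-open set containing $A$ (and similarly $V$ for $\mu_2$), so this degenerate branch simply does not occur—unless $A=X$, where the statement is trivial since $X_{\mu_1}^\wedge=X_{\mu_2}^\wedge=X$ by Lemma \ref{7}(1). I expect essentially no obstacle; the main point is just to invoke Definition \ref{3} to get the two separators at once and then take unions over $A$.
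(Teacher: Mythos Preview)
Your argument contains a genuine gap at the ``key step''. You assert that Definition~\ref{3} gives, for every ordered pair $(x,y)$ with $x\neq y$, \emph{both} a $\mu_1$-open set $U_x$ with $x\in U_x$, $y\notin U_x$ \emph{and} a $\mu_2$-open set $V_x$ with $x\in V_x$, $y\notin V_x$. But the paper's pairwise $T_1$ is asymmetric in the two topologies: for an unordered pair $\{x,y\}$ one only gets a $\mu_1$-open set around one of the points and a $\mu_2$-open set around the other (this is exactly how the proof of Theorem~\ref{21} is organized, with its Case~I / Case~II, and it is why Example~\ref{25} and Example~\ref{35} are declared pairwise $T_1$). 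Concretely, in Example~\ref{25} ($X=\{a,b\}$, $\mu_1=\{\emptyset,\{a\}\}$, $\mu_2=\{\emptyset,\{b\}\}$) there is \emph{no} $\mu_1$-open set containing $b$ at all; taking $A=\{b\}$ and $y=a$, your set $U_b$ does not exist, the union $U=\bigcup_{x\in A}U_x$ cannot be formed, and indeed $A_{\mu_1}^\wedge=X\neq A$. Your argument, if valid, would establish the strictly stronger conclusions $A=A_{\mu_1}^\wedge$ and $A=A_{\mu_2}^\wedge$ separately, and this example shows those are false.

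The repair is precisely the paper's route: use Theorem~\ref{21} to see that for each $y\notin A$ the singleton $\{y\}$ is $\mu_1$-closed or $\mu_2$-closed, so $X\setminus\{y\}$ is a $\mu_1$-open or a $\mu_2$-open superset of $A$; hence $y\notin A_{\mu_1}^\wedge$ \emph{or} $y\notin A_{\mu_2}^\wedge$, and in either case $y\notin A_{\mu_1}^\wedge\cap A_{\mu_2}^\wedge$. The point is that which topology excludes $y$ depends on $y$, not on $x\in A$, so one cannot hope to build a single $\mu_1$-open cover of $A$ missing $y$ as you attempted.
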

\begin{proof}
Suppose $ (X,  \mu_1, \mu_2) $ is pairwise $ T_1 $ and $ A\subset X $. Then by Theorem \ref{21}, each singleton is either $ \mu_1 $-closed or $ \mu_2 $-closed. So for $ x\in X, X-\{x\} $ is either $ \mu_1 $-open or $ \mu_2 $-open. Let $ A_1=  [x : x\in X, \{x\}$ is $ \mu_1 $-closed] and $ A_2= [x : x\in X, \{x\}$ is $ \mu_2 $-closed]. Then $ A_{\mu_1}^\wedge \subset A\cup A_2$ and $ A_{\mu_2}^\wedge \subset A\cup A_1$. So $ A \subset A_{\mu_1}^\wedge\cap A_{\mu_2}^\wedge \subset (A\cup A_1)\bigcap (A\cup A_2)=A\cup (A_1\cap A_2)=A$ which implies that $ A = A_{\mu_1}^\wedge\cap A_{\mu_2}^\wedge$. Hence $ A $ is  $ \wedge_{\mu_1\mu_2} $-set. 
\end{proof}

\begin{theorem} \label{52}If a GBT-space $ (X,  \mu_1, \mu_2) $ is pairwise $ T_\frac{1}{2} $, then every subset of $ X $ is pairwise $ \lambda $-closed.\end{theorem}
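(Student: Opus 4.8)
The statement to prove is: if $(X,\mu_1,\mu_2)$ is pairwise $T_\frac12$, then every subset of $X$ is pairwise $\lambda$-closed. The natural route is to reduce the claim about an arbitrary subset $A$ to a pointwise statement about singletons, exactly as in the proof of Theorem~\ref{51}. By Lemma~\ref{45}(iv), $A$ is pairwise $\lambda$-closed precisely when $A = (\overline{A_{\mu_1}}\cap\overline{A_{\mu_2}})\cap(A_{\mu_1}^\wedge\cap A_{\mu_2}^\wedge)$; since the right-hand side always contains $A$ (using Lemma~\ref{7}(2) for each factor), it suffices to show the reverse inclusion, i.e. that no point of $X\setminus A$ can lie in all four of $\overline{A_{\mu_1}}$, $\overline{A_{\mu_2}}$, $A_{\mu_1}^\wedge$, $A_{\mu_2}^\wedge$ simultaneously.

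First I would invoke Theorem~\ref{33}: since $(X,\mu_1,\mu_2)$ is pairwise $T_\frac12$, each singleton $\{x\}$ is either $\mu_i$-open or $\mu_i$-closed, for $i=1$ and for $i=2$. Then I would fix a point $p \notin A$ and split into cases according to this dichotomy for $\{p\}$. If $\{p\}$ is $\mu_1$-open, then $p$ cannot be a $\mu_1$-limit point of $A$, and since $p\notin A$ we get $p\notin\overline{A_{\mu_1}}$, so $p$ fails to lie in the first factor. If instead $\{p\}$ is $\mu_1$-closed, then $X\setminus\{p\}$ is a $\mu_1$-open set containing $A$ (as $p\notin A$), hence $A_{\mu_1}^\wedge \subset X\setminus\{p\}$, so $p\notin A_{\mu_1}^\wedge$ and $p$ fails the third factor. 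Either way, $p\notin (\overline{A_{\mu_1}}\cap\overline{A_{\mu_2}})\cap(A_{\mu_1}^\wedge\cap A_{\mu_2}^\wedge)$ — in fact I only need the $\mu_1$-coordinate of the dichotomy, the $\mu_2$-part of Theorem~\ref{33} being redundant here. This shows $(\overline{A_{\mu_1}}\cap\overline{A_{\mu_2}})\cap(A_{\mu_1}^\wedge\cap A_{\mu_2}^\wedge)\subset A$, completing the equality.

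There is a subtlety to handle carefully: the inclusion $A \subset (\overline{A_{\mu_1}}\cap\overline{A_{\mu_2}})\cap(A_{\mu_1}^\wedge\cap A_{\mu_2}^\wedge)$ needs $A\subset\overline{A_{\mu_i}}$ (immediate) and $A\subset A_{\mu_i}^\wedge$, which is Lemma~\ref{7}(2); the degenerate conventions in Definition~\ref{6} ($A_{\mu_j}^\wedge=X$ when no $\mu_j$-open superset exists) do not cause trouble since $X$ certainly contains $A$. I do not expect a genuine obstacle here: the only thing to watch is to phrase the case split so that exactly one factor is shown to miss $p$ in each case, and to note explicitly that we may present $A$ in the form required by Definition~\ref{44} (pairwise $\lambda$-closed) by taking $F_1=\overline{A_{\mu_1}}$, $F_2=\overline{A_{\mu_2}}$, $L_1=A_{\mu_1}^\wedge$, $L_2=A_{\mu_2}^\wedge$, the latter two being $\wedge_{\mu_j}$-sets by Lemma~\ref{7}(3). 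The whole argument is a direct transcription of the method of Theorem~\ref{51}, now using the weaker pairwise $T_\frac12$ hypothesis in place of pairwise $T_1$, and I would present it in about the same length.
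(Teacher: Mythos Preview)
Your proof is correct. Both your argument and the paper's rely on the singleton dichotomy furnished by pairwise $T_\frac12$, but the organization differs. The paper invokes Remark~\ref{32} (each singleton is $\mu_i$-open or $\mu_j$-closed, $i\neq j$), partitions $X\setminus A$ into four pieces $G_1,G_2,H_1,H_2$ according to which alternative holds, and then exhibits $A$ explicitly as $(F_1\cap F_2)\cap(L_1\cap L_2)$ with $F_i=X\setminus G_i$ and $L_j=X\setminus H_j$, matching Definition~\ref{44} directly. You instead verify the criterion of Lemma~\ref{45}(iv) pointwise, using Theorem~\ref{33} and in fact only its $\mu_1$-half. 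Your route is shorter and makes the pleasant observation that a single generalized topology suffices once Theorem~\ref{33} is available; the paper's route is more constructive in that it produces the decomposition of Definition~\ref{44} without passing through the closure/$\wedge$-characterization.
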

\begin{proof}
Suppose that the GBT-space $ (X,  \mu_1, \mu_2) $ is pairwise $ T_\frac{1}{2} $ and $ A\subset X $. Then by Remark \ref{32}, every singleton of $ X $ is either $ \mu_i $-open or $ \mu_j $-closed, $ i,j=1,2; i\not=j $. Let $ G_i= \{x: x\in X-A, \{x\} $ is $ \mu_i $-open, $ i=1,2\} $, $ H_j= \{x: x\in X-A, \{x\} $ is $ \mu_j $-closed, $ j=1,2\} $. So $ X-A= G_1\cup G_2\cup H_1\cup H_2$. Again let $ F_i=\cap [X-\{x\} : x\in G_i]=X-G_i, i=1,2 $ and $ L_j=\cap [X-\{x\} : x\in H_j]=X-H_j, j=1,2 $.

Note that $ F_i $ is $ \mu_i $-closed set and $ L_j $ is $ \wedge_{\mu_j} $-set.

Now, $ (F_1\cap F_2)\cap (L_1\cap L_2)=(X-G_1)\cap (X-G_2)\cap (X-H_1)\cap (X-H_2) = X-(G_1\cup G_2\cup H_1\cup H_2) = A$. Thus $ A $ is pairwise $ \lambda$-closed. 
\end{proof}

But the converse may not be true as seen from the Example \ref{35} where every subset is pairwise $ \lambda $-closed but the GBT-space is not pairwise $ T_\frac{1}{2} $.
\begin{definition}\label{53} A GBT-space $ (X,  \mu_1, \mu_2) $ is said to be pairwise $ \lambda $-symmetric if every pairwise $ \lambda $-closed set is both $ \lambda_{\mu_1} $-closed with respect to $ \mu_2 $ and $ \lambda_{\mu_2} $-closed with respect to $ \mu_1 $.
\end{definition}

\begin{theorem} \label{54} Pairwise $ T_1 $ and pairwise $ \lambda $-symmetric GBT-space  is   pairwise $ T_\frac{1}{2} $.

\end{theorem}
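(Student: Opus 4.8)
The plan is to reduce the statement to a purely ``pointwise'' criterion and then combine the two hypotheses. First I would invoke Theorem \ref{33}: a GBT-space is pairwise $T_\frac12$ precisely when each singleton $\{x\}$ is either $\mu_i$-open or $\mu_i$-closed for $i=1,2$. So it suffices to fix an arbitrary $x\in X$ and show that $\{x\}$ is $\mu_1$-open or $\mu_1$-closed, and likewise that it is $\mu_2$-open or $\mu_2$-closed. By the symmetry of the roles of $\mu_1$ and $\mu_2$ in both hypotheses, it is enough to treat one of these two, say the $\mu_1$ case.

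Next I would bring in pairwise $T_1$. By Theorem \ref{21}, in a pairwise $T_1$ space each singleton is $\mu_1$-closed or $\mu_2$-closed. If $\{x\}$ happens to be $\mu_1$-closed we are done immediately. So assume $\{x\}$ is $\mu_2$-closed (and, for the case at hand, not $\mu_1$-closed); the goal becomes to prove $\{x\}$ is $\mu_1$-open, which by Definition \ref{38} will follow once we exhibit $\{x\}$ as $\lambda_{\mu_1}$-closed with respect to $\mu_2$ — wait, more precisely we want $\{x\}$ to be $\mu_1$-\emph{open}, so the cleanest route is: show $X-\{x\}$ is $\lambda_{\mu_1}$-closed with respect to $\mu_2$ using the $\lambda$-symmetry hypothesis, hence $\mu_1$-closed-plus-$\wedge_{\mu_2}$-set, and then peel off the $\mu_1$-closedness.

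The mechanism I expect to use is the following. Since $\{x\}$ is $\mu_2$-closed, the complement $X-\{x\}$ is $\mu_2$-open, in particular a $\wedge_{\mu_2}$-set, hence trivially pairwise $\lambda$-closed (it is $(X\cap X)\cap\big((X-\{x\})\cap X\big)$, i.e.\ of the form $(F_1\cap F_2)\cap(L_1\cap L_2)$ with $F_1=F_2=L_2=X$ and $L_1$ a suitable $\wedge_{\mu_1}$-set; alternatively one checks directly via Lemma \ref{45}(iv) that $\overline{(X-\{x\})_{\mu_1}}\cap\overline{(X-\{x\})_{\mu_2}}\cap (X-\{x\})_{\mu_1}^\wedge\cap (X-\{x\})_{\mu_2}^\wedge = X-\{x\}$, because $X-\{x\}$ is already $\mu_2$-open so its $\wedge_{\mu_2}$-hull is itself). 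Now pairwise $\lambda$-symmetry (Definition \ref{53}) forces $X-\{x\}$ to be $\lambda_{\mu_1}$-closed with respect to $\mu_2$; by Lemma \ref{43}(iv) this means $X-\{x\}=\overline{(X-\{x\})_{\mu_1}}\cap(X-\{x\})_{\mu_2}^\wedge$. Since $X-\{x\}$ is $\mu_2$-open, $(X-\{x\})_{\mu_2}^\wedge=X-\{x\}$, so the equation reduces to $X-\{x\}\subseteq\overline{(X-\{x\})_{\mu_1}}$, i.e.\ $X-\{x\}$ is $\mu_1$-closed, i.e.\ $\{x\}$ is $\mu_1$-open. Running the same argument with the indices $1,2$ interchanged handles the $\mu_2$ case, and Theorem \ref{33} then yields that $(X,\mu_1,\mu_2)$ is pairwise $T_\frac12$.

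The main obstacle is the bookkeeping around the hull operators when $\{x\}$ is not $\mu_1$-closed: one must be sure that declaring $X-\{x\}$ pairwise $\lambda$-closed is legitimate (it is, since every $\wedge_{\mu_2}$-set, in particular every $\mu_2$-open set, is pairwise $\lambda$-closed — this is the remark following Definition \ref{44} together with Note \ref{50}), and that after applying $\lambda$-symmetry the $\mu_2$-part of the $\lambda_{\mu_1}$-representation collapses because $X-\{x\}$ is $\mu_2$-open. If instead $\{x\}$ \emph{is} $\mu_1$-closed the $\mu_1$-clause of Theorem \ref{33} holds for free, so no case is lost. I would also note at the end, as the paper tends to, that the converse fails, referring to an example where some subset is pairwise $\lambda$-closed and the space is pairwise $T_1$ yet not pairwise $T_\frac12$ (e.g.\ Example \ref{35}).
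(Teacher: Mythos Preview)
Your argument breaks at the ``peel off the $\mu_1$-closedness'' step. After invoking Lemma~\ref{43}(iv) you have
\[
X\setminus\{x\}=\overline{(X\setminus\{x\})_{\mu_1}}\cap (X\setminus\{x\})_{\mu_2}^\wedge,
\]
and since $X\setminus\{x\}$ is $\mu_2$-open you correctly note $(X\setminus\{x\})_{\mu_2}^\wedge=X\setminus\{x\}$. But substituting this gives only
\[
X\setminus\{x\}=\overline{(X\setminus\{x\})_{\mu_1}}\cap (X\setminus\{x\}),
\]
i.e.\ $X\setminus\{x\}\subseteq\overline{(X\setminus\{x\})_{\mu_1}}$, which is a tautology: every set is contained in its $\mu_1$-closure. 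It does \emph{not} give $\overline{(X\setminus\{x\})_{\mu_1}}\subseteq X\setminus\{x\}$, so you cannot conclude that $X\setminus\{x\}$ is $\mu_1$-closed. (Concretely: take $X=\{a,b\}$, $\mu_1=\{\emptyset\}$, $\mu_2=\{\emptyset,\{a\}\}$; then $A=\{a\}$ is $\mu_2$-open and $\lambda_{\mu_1}$-closed with respect to $\mu_2$, yet $A$ is not $\mu_1$-closed.) The hypothesis ``$\{x\}$ not $\mu_1$-closed'' that you carry along is never actually used, and it would not rescue this step either, since it speaks to $\mu_1$-openness of $X\setminus\{x\}$, not its $\mu_1$-closure.

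The paper runs the implication in the opposite direction, which is what makes it work. One assumes $\{x\}$ is \emph{not $\mu_1$-open}; then $A=X\setminus\{x\}$ is not $\mu_1$-closed, so $\overline{A_{\mu_1}}=X$. Pairwise $T_1$ gives (via Theorem~\ref{51} and Note~\ref{50}) that $A$ is pairwise $\lambda$-closed; $\lambda$-symmetry then yields $A=\overline{A_{\mu_1}}\cap A_{\mu_2}^\wedge=X\cap A_{\mu_2}^\wedge=A_{\mu_2}^\wedge$. For a cofinite-of-one set this forces $A$ to be $\mu_2$-open, i.e.\ $\{x\}$ is $\mu_2$-closed. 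Together with the symmetric case this verifies the Remark~\ref{32}/Corollary~\ref{29} criterion for pairwise $T_{\frac12}$. Your overall architecture (reduce to singletons, use $\lambda$-symmetry on the complement) is the right one; the fix is simply to start from ``not $\mu_i$-open'' so that the $\mu_i$-closure term, rather than the $\wedge_{\mu_j}$ term, becomes $X$ and the equation carries content.
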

\begin{proof}
Suppose $ (X,  \mu_1, \mu_2) $ is pairwise $ T_1 $ and pairwise $ \lambda $-symmetric and $ \{x\} $ is not $ \mu_1 $-open. So $ X-\{x\}=A$, say, is not $ \mu_1 $-closed, but $ A $ is pairwise $ \lambda $-closed  by Theorem \ref{51} and Note \ref{50}. For pairwise $ \lambda $-symmetryness, $ A $ is $ \lambda_{\mu_1} $-closed with respect to $ \mu_2 $, so $ A=\overline{A_{\mu_1}}\cap A_{\mu_2}^\wedge $. Now $ \overline{A_{\mu_1}} = X $. Therefore $ A= A_{\mu_2}^\wedge\cap X=A_{\mu_2}^\wedge $. So $ A=X-\{x\} $ is a $ \mu_2 $-open set which implies $ \{x\} $ is a $ \mu_2 $-closed. Again if $ \{x\} $ is not $ \mu_2 $-open then we can prove in similar way that $ \{x\} $ is $\mu_1 $-closed. By Remark \ref{32}, the GBT-space is pairwise $ T_\frac{1}{2} $.
\end{proof}

\section{\bf    Pairwise $ \textit  T_\frac{1}{4}$, Pairwise $   \textit  T_\frac{3}{8}$  and pairwise $ \textit     T_\frac{5}{8}$ GBT-space}

\begin{definition}\label{55} A GBT-space $ (X,  \mu_1, \mu_2) $ is called pairwise $  T_\frac{1}{4}$    if and only if for every finite subset $P$ of $X$ and for every $ y\in X - P $,  there exists a set $A_y$ containing $P$ and disjoint from $\{y\}$  such that $A_y$ is either $ \mu_i $-open or $ \mu_j $-closed, $ i, j=1, 2 $.\end{definition}

\begin{theorem}\label{56} A GBT-space $ (X,  \mu_1, \mu_2) $ is pairwise $  T_\frac{1}{4}$ if and only if every finite subset of $ X $ is pairwise $ \lambda $-closed.
\end{theorem}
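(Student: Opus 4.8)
The plan is to deduce both implications from the characterization in Lemma \ref{45}(iv): a set $A$ is pairwise $\lambda$-closed precisely when $A=(\overline{A_{\mu_1}}\cap\overline{A_{\mu_2}})\cap(A_{\mu_1}^\wedge\cap A_{\mu_2}^\wedge)$. So throughout, for a finite $P\subset X$ I will compare $P$ with the set $K_P:=\overline{P_{\mu_1}}\cap\overline{P_{\mu_2}}\cap P_{\mu_1}^\wedge\cap P_{\mu_2}^\wedge$, noting that $P\subset K_P$ always holds by Lemma \ref{7}(2) together with the fact that a set lies in its own $\mu_i$-closure.

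For the forward direction I would assume $(X,\mu_1,\mu_2)$ is pairwise $T_\frac14$ and fix a finite $P\subset X$; it then suffices to show $K_P\subset P$. Take $y\in X-P$. Pairwise $T_\frac14$ supplies a set $A_y\supset P$ with $y\notin A_y$ that is either $\mu_i$-open or $\mu_j$-closed. If $A_y$ is $\mu_i$-open, then since $A_y$ is a $\mu_i$-open set containing $P$ we get $P_{\mu_i}^\wedge\subset A_y$, hence $y\notin P_{\mu_i}^\wedge$; if $A_y$ is $\mu_j$-closed, then $\overline{P_{\mu_j}}$, being the smallest $\mu_j$-closed set containing $P$, satisfies $\overline{P_{\mu_j}}\subset A_y$, so $y\notin\overline{P_{\mu_j}}$. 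In either case $y\notin K_P$, which gives $K_P\subset P$, and Lemma \ref{45}(iv) shows $P$ is pairwise $\lambda$-closed.

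For the converse I would assume every finite subset of $X$ is pairwise $\lambda$-closed, fix a finite $P\subset X$ and $y\in X-P$, and produce the set $A_y$ demanded by Definition \ref{55}. Since $P=K_P$ by Lemma \ref{45}(iv) and $y\notin P$, the point $y$ must fail to belong to at least one of $\overline{P_{\mu_1}},\overline{P_{\mu_2}},P_{\mu_1}^\wedge,P_{\mu_2}^\wedge$. If $y\notin\overline{P_{\mu_i}}$, put $A_y=\overline{P_{\mu_i}}$, a $\mu_i$-closed set containing $P$ and missing $y$. The one step that needs a remark is the case $y\notin P_{\mu_i}^\wedge$: here $P_{\mu_i}^\wedge$ itself need not be $\mu_i$-open, but by Definition \ref{6} the failure $P_{\mu_i}^\wedge\ne X$ means there is at least one $\mu_i$-open set containing $P$, and $y$ lies outside the intersection $P_{\mu_i}^\wedge=\cap\{U\in\mu_i:P\subset U\}$, so some $\mu_i$-open $U\supset P$ has $y\notin U$; take $A_y=U$. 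In all cases $A_y$ has the required form, so $(X,\mu_1,\mu_2)$ is pairwise $T_\frac14$.

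The only genuine obstacle is that last observation — promoting exclusion from the $\wedge_{\mu_i}$-set $P_{\mu_i}^\wedge$ to exclusion from a bona fide $\mu_i$-open set; everything else is routine bookkeeping with Lemmas \ref{7} and \ref{45} and the conventions of Definition \ref{6}.
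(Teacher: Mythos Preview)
Your proof is correct and follows essentially the same approach as the paper's. The only cosmetic difference is in the forward direction: the paper collects the $A_y$'s into sets $L_i=\bigcap\{A_y:A_y\ \mu_i\text{-open}\}$ and $F_j=\bigcap\{A_y:A_y\ \mu_j\text{-closed}\}$ and appeals to Definition~\ref{44} directly, whereas you route everything through the characterization $P=K_P$ of Lemma~\ref{45}(iv); the converse directions are identical, and your remark about promoting $y\notin P_{\mu_i}^\wedge$ to an actual $\mu_i$-open witness is, if anything, more carefully stated than the paper's version.
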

\begin{proof}
Let   $ (X,  \mu_1, \mu_2) $ be a pairwise $  T_\frac{1}{4}$ and $ P $ be a finite subset of $ X $. So for every $ y\in X-P $ there is a set $A_y$ containing $ P $ and disjoint from $ \{y\} $ such that $A_y$ is either $ \mu_i $-open or $ \mu_j $-closed, $ i, j=1, 2 $. Let $ L_i $ be the intersection of all such $ \mu_i $-open sets $A_y$ and $ F_j $ be the intersection of all such $ \mu_j $-closed sets $A_y$ as $ y $ runs over $ X-P $. Then $ L_i=(L_i)_{\mu_i}^\wedge $ and $ F_j $ is $ \mu_j $-closed  and $ P=F_1\cap F_2\cap L_1\cap L_2 $. Thus $ P $ is pairwise $ \lambda $-closed. 

Conversely, suppose $ P $ is a finite set of $ X,  y\in X-P $ and $ P $ is pairwise $ \lambda $-closed, then by Lemma \ref{45} (iv), $ P=\overline{P_{\mu_1}}\cap \overline{P_{\mu_2}}\cap P_{\mu_1}^\wedge \cap  P_{\mu_2}^\wedge $. If $ y $ belongs to none of the subsets $ \overline{P_{\mu_1}}, \overline{P_{\mu_2}}, P_{\mu_1}^\wedge,   P_{\mu_2}^\wedge $ then we are done. So $ y $ does not belong atleast one of the four subsets $ \overline{P_{\mu_1}}, \overline{P_{\mu_2}}, P_{\mu_1}^\wedge,   P_{\mu_2}^\wedge $. Suppose $ y\not\in \overline{P_{\mu_j}} $, then there exists a $ \mu_j $-closed set $A_y$(say) containing $ P $ such that $ \{y\}\not\subset A_y $. Again if $ y\not\in P_{\mu_i}^\wedge $, then there exists some $ \mu_i $-open set $A_y$ (say) containing $ P $ such that $ \{y\}\not\subset A_y $; $ i, j=1, 2 $.  Hence by Definition, $ (X,  \mu_1, \mu_2) $ is  pairwise $  T_\frac{1}{4}$.
\end{proof}

\begin{theorem}\label{57}  Every pairwise $T_\frac{1}{4}$ GBT-space $ (X,  \mu_1, \mu_2) $ is pairwise $  T_0 $. \end{theorem}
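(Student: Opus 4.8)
The plan is to show that pairwise $T_\frac{1}{4}$ implies pairwise $T_0$ by unwinding the definitions and appealing to the characterization of pairwise $T_0$ already established in Theorem \ref{15}. First I would take two distinct points $x, y \in X$. The key observation is that a singleton $\{x\}$ is itself a finite subset of $X$, so I may apply the pairwise $T_\frac{1}{4}$ hypothesis with $P = \{x\}$ and the point $y \in X - P$. This produces a set $A_y$ with $x \in A_y$, $y \notin A_y$, and $A_y$ is either $\mu_i$-open or $\mu_j$-closed for some $i, j \in \{1,2\}$.

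Next I would invoke Theorem \ref{15}, which says precisely that a GBT-space is pairwise $T_0$ if and only if for every pair of distinct points there is a set containing exactly one of them that is either $\mu_i$-open or $\mu_j$-closed. The set $A_y$ furnished above is exactly such a set: it contains $x$ but not $y$, and it has the required openness/closedness property. Hence the condition of Theorem \ref{15} is satisfied for the pair $x, y$, and since $x, y$ were arbitrary distinct points, $(X, \mu_1, \mu_2)$ is pairwise $T_0$.

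Alternatively, if one prefers not to route through Theorem \ref{15}, the argument can be made directly from Definition \ref{2}: if $A_y$ is $\mu_i$-open then it is a $\mu_i$-open set separating $x$ from $y$ in the required one-sided sense; if instead $A_y$ is $\mu_j$-closed, then $X - A_y$ is $\mu_j$-open, contains $y$, and excludes $x$, which is again the pairwise $T_0$ condition. Either way the conclusion follows immediately.

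There is essentially no obstacle here: the proof is a one-step specialization of the finite-set condition to singletons combined with an already-proven equivalence, so the only thing to be careful about is matching the index bookkeeping ($i, j = 1, 2$, possibly equal or not, as in the statement of Definition \ref{55} and Theorem \ref{15}) and noting that the two cases ($A_y$ open versus $A_y$ closed) both land in the pairwise $T_0$ definition. I would write the proof in two or three short sentences accordingly.
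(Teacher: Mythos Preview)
Your proposal is correct and follows essentially the same approach as the paper: specialize the pairwise $T_\frac{1}{4}$ condition to the finite set $P=\{x\}$ to obtain a set $A_y$ that is $\mu_i$-open or $\mu_j$-closed and separates $x$ from $y$, then invoke Theorem~\ref{15}. The paper's proof is exactly this two-line argument; your alternative direct route via Definition~\ref{2} is also fine but not needed.
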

\begin{proof}
Suppose the GBT-space $ (X,  \mu_1, \mu_2) $ is pairwise  $T_\frac{1}{4}$ and $ x , y $ are two distinct points of $ X $.  Since the GBT-space is pairwise  $T_\frac{1}{4}$, then for every $ y\in X - \{x\}$ there exists a set $A_y$ such that $\{x\}\subset A_y$  and $ \{y\} \not \subset A_y $ where $ A_y$ is either $ \mu_i $-open or $ \mu_j $-closed, $ i, j=1, 2 $. Therefore by Theorem \ref{15}, the GBT-space is pairwise $T_0$.
\end{proof}

But converse may not be true as seen from Example \ref{17} taking $ \{a, b\} $ as finite set.
\begin{theorem}\label{58} A GBT-space $ (X,  \mu_1, \mu_2) $ is pairwise $  T_0 $ if and only if each singleton is pairwise $ \lambda $-closed. Proof is similar to the Theorem \ref{56}, so is omitted.
\end{theorem}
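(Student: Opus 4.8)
The plan is to mimic the proof of Theorem~\ref{56}, with singletons playing the role of finite subsets and pairwise $T_0$ that of pairwise $T_\frac{1}{4}$; the link in both directions is the characterization of pairwise $T_0$ in Theorem~\ref{15} together with the representation of pairwise $\lambda$-closed sets in Lemma~\ref{45}(iv).

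For the \emph{only if} part I would fix $x\in X$ and show $\{x\}=\overline{\{x\}_{\mu_1}}\cap\overline{\{x\}_{\mu_2}}\cap\{x\}_{\mu_1}^\wedge\cap\{x\}_{\mu_2}^\wedge$, which by Lemma~\ref{45}(iv) is precisely the assertion that $\{x\}$ is pairwise $\lambda$-closed. The inclusion $\subset$ is immediate from Lemma~\ref{7}(2) and the fact that $x$ lies in its own $\mu_i$-closures. For the reverse inclusion, suppose $y\neq x$ lies in that fourfold intersection. By Theorem~\ref{15} there is a set $A$ containing exactly one of $x,y$ with $A$ either $\mu_i$-open or $\mu_j$-closed. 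If $x\in A$: when $A$ is $\mu_i$-open we get $\{x\}_{\mu_i}^\wedge\subset A$, and when $A$ is $\mu_j$-closed we get $\overline{\{x\}_{\mu_j}}\subset A$; either way $y\in A$, contradicting $y\notin A$. If $y\in A$: then $X-A$ contains $x$ but not $y$ and is $\mu_i$-closed (resp.\ $\mu_j$-open), so $\overline{\{x\}_{\mu_i}}\subset X-A$ (resp.\ $\{x\}_{\mu_j}^\wedge\subset X-A$), again forcing $y\in X-A$, a contradiction. Hence no such $y$ exists and the fourfold intersection equals $\{x\}$.

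For the converse, assume each singleton is pairwise $\lambda$-closed and take distinct $x,y\in X$. By Lemma~\ref{45}(iv), $\{x\}=\overline{\{x\}_{\mu_1}}\cap\overline{\{x\}_{\mu_2}}\cap\{x\}_{\mu_1}^\wedge\cap\{x\}_{\mu_2}^\wedge$, so $y$ fails to belong to at least one of these four sets. If $y\notin\overline{\{x\}_{\mu_j}}$ for some $j$, then $A=\overline{\{x\}_{\mu_j}}$ is $\mu_j$-closed, contains $x$, and omits $y$. If $y\notin\{x\}_{\mu_i}^\wedge$ for some $i$, then by Definition~\ref{6} some $\mu_i$-open set $A$ satisfies $x\in A$ and $y\notin A$ (the degenerate value $\{x\}_{\mu_i}^\wedge=X$ cannot occur here, since it would force $y$ into the set). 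In both cases we have produced a set containing exactly one of $x,y$ that is $\mu_i$-open or $\mu_j$-closed, so Theorem~\ref{15} gives that $(X,\mu_1,\mu_2)$ is pairwise $T_0$.

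I do not foresee a genuine obstacle, as this is a direct specialization of Theorem~\ref{56}; the only point needing care is the bookkeeping with the conventions of Definition~\ref{6} for $A_\mu^\wedge$ (and for $\overline{A_\mu}$) in the degenerate cases where no separating open or closed set exists, but those conventional values always place the relevant point inside the set and therefore never interfere with the cases actually used above.
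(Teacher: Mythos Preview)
Your proposal is correct and follows essentially the approach the paper indicates, namely the specialization of the proof of Theorem~\ref{56} to singletons using Theorem~\ref{15} and Lemma~\ref{45}(iv). The only cosmetic difference is that in the forward direction you verify the identity $\{x\}=\overline{\{x\}_{\mu_1}}\cap\overline{\{x\}_{\mu_2}}\cap\{x\}_{\mu_1}^\wedge\cap\{x\}_{\mu_2}^\wedge$ of Lemma~\ref{45}(iv) directly, whereas the proof of Theorem~\ref{56} builds the decomposition $F_1\cap F_2\cap L_1\cap L_2$ from the separating sets $A_y$; these are two phrasings of the same argument.
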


\begin{definition}\label{59} A GBT-space $ (X,  \mu_1, \mu_2) $ is called pairwise $  T_\frac{3}{8}$ if and only if  for every countable subset $P$ of $X$ and for every $ y\in X - P $,  there exists a set $A_y$ containing $P$ and disjoint from $\{y\}$  such that $A_y$ is either $ \mu_i $-open or $ \mu_j $-closed. $ i, j=1, 2 $.\end{definition}

\begin{theorem} \label{60}  A GBT-space $ (X,  \mu_1, \mu_2) $ is pairwise $T_\frac{3}{8}$ if and only if every countable subset of $X$ is pairwise $ \lambda $-closed. Proof is similar to  the Theorem \ref{56}, so is omitted.
\end{theorem}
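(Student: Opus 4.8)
The plan is to mirror the proof of Theorem \ref{56} line for line, the only change being that the set $P$ is now taken to be countable rather than finite. In fact I would point out at the outset that finiteness of $P$ is never actually used in the proof of Theorem \ref{56}: the same argument delivers Theorems \ref{58}, \ref{56} and \ref{60} at once (for singletons, finite sets and countable sets respectively), so in principle only a remark is required. Still, here is the skeleton I would write.

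For the ``only if'' direction, assume $(X,\mu_1,\mu_2)$ is pairwise $T_\frac{3}{8}$ and let $P\subset X$ be countable. By Definition \ref{59}, for each $y\in X-P$ there is a set $A_y$ with $P\subset A_y$, $y\notin A_y$, which is $\mu_i$-open or $\mu_j$-closed for some $i,j\in\{1,2\}$. I would sort these witnesses into four families according to which of ``$\mu_1$-open, $\mu_2$-open, $\mu_1$-closed, $\mu_2$-closed'' they belong to, set $L_1,L_2$ equal to the intersections of the two open families and $F_1,F_2$ equal to the intersections of the two closed families (an empty family giving $X$, which is a $\wedge_{\mu_i}$-set by Lemma \ref{7}(1) and is $\mu_j$-closed). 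Then each $F_j$ is $\mu_j$-closed since a generalized topology is closed under arbitrary unions, and each $L_i$ is a $\wedge_{\mu_i}$-set because an arbitrary intersection $L=\bigcap_\alpha U_\alpha$ of $\mu_i$-open sets satisfies $L_{\mu_i}^\wedge\subset (U_\alpha)_{\mu_i}^\wedge=U_\alpha$ for every $\alpha$ (Lemma \ref{7}(2),(4)), hence $L_{\mu_i}^\wedge=L$. Since each $A_y$ contains $P$ we get $P\subset F_1\cap F_2\cap L_1\cap L_2$, while each $y\in X-P$ is missed by its own $A_y$, hence by the corresponding intersection; therefore $P=(F_1\cap F_2)\cap(L_1\cap L_2)$ is pairwise $\lambda$-closed.

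For the ``if'' direction, let $P$ be countable, $y\in X-P$, and suppose $P$ is pairwise $\lambda$-closed. By Lemma \ref{45}(iv), $P=\overline{P_{\mu_1}}\cap\overline{P_{\mu_2}}\cap P_{\mu_1}^\wedge\cap P_{\mu_2}^\wedge$, so $y$ must fail to lie in at least one of these four sets. If $y\notin\overline{P_{\mu_j}}$ then $A_y:=\overline{P_{\mu_j}}$ is a $\mu_j$-closed set containing $P$ and avoiding $y$; if $y\notin P_{\mu_i}^\wedge$ then, by the definition of $P_{\mu_i}^\wedge$, some $\mu_i$-open set containing $P$ avoids $y$ and serves as $A_y$. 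Either way Definition \ref{59} is met, so $(X,\mu_1,\mu_2)$ is pairwise $T_\frac{3}{8}$.

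The only step that is not completely automatic — and the reason one should not just copy the finite case blindly — is that now $X-P$ may be uncountable even though $P$ is countable, so the intersections defining $L_i$ and $F_j$ run over possibly large index sets. But this is harmless: generalized topologies are closed under arbitrary unions, so arbitrary intersections of $\mu_j$-closed sets are $\mu_j$-closed, and $\wedge_{\mu_i}$-sets are by definition arbitrary intersections of $\mu_i$-open sets, so no bound on the index set is needed. Hence the proof of Theorem \ref{56} transfers verbatim, which is why it is omitted.
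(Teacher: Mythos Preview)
Your proposal is correct and follows exactly the approach the paper intends: it mirrors the proof of Theorem \ref{56} with $P$ taken countable, and your observation that finiteness of $P$ plays no role there is precisely why the paper omits the argument. Your added remarks (handling empty families via $X$, and verifying that arbitrary intersections of $\mu_i$-open sets are $\wedge_{\mu_i}$-sets) only make explicit what the paper leaves tacit.
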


 \begin{definition}\label{61} A GBT-space $ (X,  \mu_1, \mu_2) $ is called pairwise $  T_\frac{5}{8}$ if and only if for any subset $P$ of $X$ and for every $ y\in X - P $,  there exists a set $A_y$ containing $P$ and disjoint from $\{y\}$  such that $A_y$ is either $ \mu_i $-open or $ \mu_j $-closed. $ i, j=1, 2 $.\end{definition}

 \begin{theorem} \label{62}  A GBT-space $ (X,  \mu_1, \mu_2) $ is pairwise $T_\frac{5}{8}$ if and only if every subset $ E $ of $X$ is pairwise $ \lambda $-closed. Proof is similar to that of the Theorem \ref{56}, so is omitted.   \end{theorem}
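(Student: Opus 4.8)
The plan is to repeat, with ``finite'' replaced by ``arbitrary'', the argument given for Theorem~\ref{56}; nowhere in that proof was the finiteness of the subset actually used. The two ingredients it relies on---that an arbitrary intersection of $\mu_j$-closed sets is again $\mu_j$-closed, and that an arbitrary intersection of $\mu_i$-open sets is a $\wedge_{\mu_i}$-set---hold for families of any cardinality, so the passage from ``finite'' (Theorem~\ref{56}) and ``countable'' (Theorem~\ref{60}) to ``arbitrary'' subsets is free.

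For the ``only if'' part, assume $(X,\mu_1,\mu_2)$ is pairwise $T_\frac{5}{8}$ and let $E\subset X$ be arbitrary. For each $y\in X-E$ fix $A_y$ with $E\subset A_y$, $y\notin A_y$, and $A_y$ either $\mu_1$-open, $\mu_2$-open, $\mu_1$-closed, or $\mu_2$-closed. Grouping the points $y$ by which of these four types $A_y$ has, let $L_i$ be the intersection of all $\mu_i$-open sets $A_y$ that arise and $F_j$ the intersection of all $\mu_j$-closed sets $A_y$ that arise ($i,j=1,2$), an empty intersection being read as $X$. Then each $F_j$ is $\mu_j$-closed (arbitrary-union axiom for $\mu_j$), and each $L_i$ is a $\wedge_{\mu_i}$-set: if $L_i=\bigcap_\alpha U_\alpha$ with every $U_\alpha\in\mu_i$, then $(L_i)_{\mu_i}^\wedge\subset U_\alpha$ for all $\alpha$, hence $(L_i)_{\mu_i}^\wedge\subset L_i$, and $L_i\subset (L_i)_{\mu_i}^\wedge$ by Lemma~\ref{7}(2). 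Since $E$ lies in each of $L_1,L_2,F_1,F_2$ while every $y\in X-E$ is excluded from at least one of them, $E=F_1\cap F_2\cap L_1\cap L_2$, and by Definition~\ref{44} $E$ is pairwise $\lambda$-closed.

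For the ``if'' part, suppose every subset of $X$ is pairwise $\lambda$-closed, fix $E\subset X$ and $y\in X-E$. By Lemma~\ref{45}(iv), $E=\overline{E_{\mu_1}}\cap\overline{E_{\mu_2}}\cap E_{\mu_1}^\wedge\cap E_{\mu_2}^\wedge$. As $y\notin E$, $y$ must fail to belong to at least one of these four sets. If $y\notin\overline{E_{\mu_j}}$ there is a $\mu_j$-closed set $A_y\supset E$ with $y\notin A_y$; if $y\notin E_{\mu_i}^\wedge$ there is a $\mu_i$-open set $A_y\supset E$ with $y\notin A_y$ ($i,j=1,2$). In either case the defining condition of Definition~\ref{61} is met, so $(X,\mu_1,\mu_2)$ is pairwise $T_\frac{5}{8}$.

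I do not anticipate a genuine obstacle: the content is identical to Theorem~\ref{56}, and the only point worth a second look is that both closure operations used above behave correctly for intersections over index sets of unrestricted size, which is immediate from the definition of a generalized topology and of $(\cdot)_{\mu_i}^\wedge$.
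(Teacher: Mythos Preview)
Your proof is correct and follows exactly the approach the paper intends: the paper's own ``proof'' is simply a pointer to Theorem~\ref{56}, and you have faithfully transcribed that argument with ``finite'' replaced by ``arbitrary'', while adding the minor justifications (that $L_i$ is a $\wedge_{\mu_i}$-set, and the convention for an empty intersection) that the paper's proof of Theorem~\ref{56} leaves implicit. There is nothing to correct.
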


\begin{remark} \label{63}It follows from Theorems \ref{52},  \ref{62},  \ref{60},    \ref{56} that every pairwise $ T_\frac{1}{2}$ GBT-space is pairwise $T_\frac{5}{8}$, every pairwise $T_\frac{5}{8}$ GBT-space is  pairwise $T_\frac{3}{8}$, every pairwise  $T_\frac{3}{8}$ GBT-space is pairwise $ T_\frac{1}{4}$. 
However, the converse of each implication may not be true which are substantiated respectively in the undermentioned examples.\end{remark}

  \begin{example}\label{64} Example of a pairwise $T_\frac{1}{4}$ GBT-space which is not pairwise $ T_\frac{3}{8}$.
 
  Suppose $X=R,  \mu_1 = \{\emptyset,  G_k $; where $ G_k$ are all countable subsets of rationals\}, $ \mu_2 = \{\emptyset, \{\sqrt{2}\cup (X-A) $, where $ A $ is finite and $ A\subset X $\}. Therefore $ (X, \mu_1, \mu_2) $ is a generalized bitopological space but not a bitopological space. Here any type of finite subset is pairwise $ \lambda $-closed. So it is pairwise $T_\frac{1}{4}$. Now suppose $ B $ is a countably infinite subset of irrationals such that $ \sqrt{2}\not\in B $. Then $ \overline{B_{\mu_1}}=P$, the set of all irrational numbers; $B_{\mu_2}^\wedge=B\cup \{\sqrt{2}\}, \overline{B_{\mu_2}}=X, B_{\mu_1}^\wedge = X $. Since  $ \overline{B_{\mu_1}}\cap \overline{B_{\mu_2}}\cap B_{\mu_1}^\wedge\cap   B_{\mu_2}^\wedge =B\cup \{\sqrt{2}\} \not=B $, so $ B $ is not pairwise $ \lambda $-closed  which implies the GBT-space is not pairwise $ T_\frac{3}{8}$.\end{example}

 \begin{example} Example of a pairwise $ T_\frac{3}{8}$ GBT-space which is not pairwise $T_\frac{5}{8}$.\end{example}
 
 It could be verified from the above Example \ref{64} by taking $ A $ is countable and $ B $ is a uncountably infinite subset of irrationals excluding $ \sqrt{2} $.
 \begin{example} 
Example of a pairwise $ T_\frac{5}{8}$ GBT-space which is not pairwise $T_\frac{1}{2}$. \\
Suppose $X=R,  \mu_1 = \{\emptyset,  G_k $; where $ G_k$ are all countable subsets of rationals\}, $ \mu_2 = \{\emptyset $, all cocountable subsets of $ X \} $. Therefore $ (X, \mu_1, \mu_2) $ is a generalized bitopological space but not a bitopological space. Here any type of subset viz. singleton, finite,  countable and uncountably infinite sets of $ R $ is pairwise $ \lambda $-closed. So it is pairwise $T_\frac{5}{8}$. Now take an uncountably infinite subset $ B \subset R $, then $ \overline{B_{\mu_2}}=X, B_{\mu_1}^\wedge=X $. So $ B $ is $ g_{\mu_2}$-closed with respect to $ \mu_1 $, but $ B $ is not $ \mu_2$-closed.  Hence the GBT-space is not pairwise $ T_\frac{1}{2}$. 
\end{example}

\begin{theorem}\label{65}
 Pairwise $ T_0 $ and pairwise $ R_0 $ GBT-space is pairwise $ T_1 $. 
\end{theorem}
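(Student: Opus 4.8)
Recall the definition: a GBT-space $(X,\mu_1,\mu_2)$ is pairwise $R_0$ if for each $\mu_i$-open set $G$, $x\in G$ implies $\overline{\{x\}_{\mu_j}}\subset G$ ($i,j=1,2;\ i\neq j$). The plan is to start from a pair of distinct points $x,y\in X$ and use pairwise $T_0$ to separate them by a $\mu_i$-open (or $\mu_j$-closed) set, then upgrade the one-sided separation to a two-sided one by invoking pairwise $R_0$.

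First I would take $x,y\in X$ with $x\neq y$. By Theorem \ref{15} there is a set $A$ containing exactly one of the two points, say $x\in A$, $y\notin A$, with $A$ either $\mu_i$-open or $\mu_j$-closed. Handling the $\mu_i$-open case first: from $x\in A$ and pairwise $R_0$ we get $\overline{\{x\}_{\mu_j}}\subset A$, so in particular $y\notin\overline{\{x\}_{\mu_j}}$, which gives a $\mu_j$-closed set (namely $\overline{\{x\}_{\mu_j}}$) containing $x$ but not $y$; equivalently $X-\overline{\{x\}_{\mu_j}}$ is a $\mu_j$-open set containing $y$ but not $x$. That furnishes one of the two open sets required by pairwise $T_1$. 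For the other, note $A$ itself is $\mu_i$-open with $x\in A$, $y\notin A$. Thus we have $\mu_i$-open $U=A$ with $x\in U, y\notin U$ and $\mu_j$-open $V=X-\overline{\{x\}_{\mu_j}}$ with $y\in V, x\notin V$ — exactly the pairwise $T_1$ condition.

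Next I would treat the case where $A$ is $\mu_j$-closed with $x\in A$, $y\notin A$. Then $X-A$ is $\mu_j$-open containing $y$ but not $x$; applying pairwise $R_0$ to this $\mu_j$-open set and the point $y$ gives $\overline{\{y\}_{\mu_i}}\subset X-A$, hence $x\notin\overline{\{y\}_{\mu_i}}$, so $X-\overline{\{y\}_{\mu_i}}$ is a $\mu_i$-open set containing $x$ but not $y$. Combined with the $\mu_j$-open set $X-A$ containing $y$ but not $x$, this again verifies pairwise $T_1$. One should also note that the roles of $\mu_1,\mu_2$ are symmetric, so the subcase where $A$ contains $y$ instead of $x$ is handled identically after swapping indices.

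The only real subtlety is bookkeeping the indices $i,j$ correctly when applying the pairwise $R_0$ condition — pairwise $R_0$ converts a $\mu_i$-open neighbourhood of a point into a $\mu_j$-closure containment, so one must be careful that the resulting open set lives in the \emph{other} generalized topology, which is precisely what pairwise $T_1$ (as in Definition \ref{3}) demands. No estimates or limit arguments are needed; the proof is a short case analysis. I expect the write-up to be only a paragraph or two once the cases are laid out cleanly.
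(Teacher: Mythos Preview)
Your argument is correct and follows essentially the same route as the paper: both proofs start from the pairwise $T_0$ hypothesis to obtain a one-sided separation and then apply pairwise $R_0$ to the resulting open set to produce the missing separation in the other generalized topology. The only difference is cosmetic: the paper invokes Theorem~\ref{18} (the closure formulation $y\notin\overline{\{x\}_{\mu_2}}$) as its entry point, which immediately yields a $\mu_2$-open set and avoids your open/closed case split coming from Theorem~\ref{15}; after that, the use of $R_0$ to pass from $\overline{\{y\}_{\mu_1}}\subset X-\overline{\{x\}_{\mu_2}}$ to the required $\mu_1$-open set is identical to your mechanism. Your version is slightly longer but equally valid, and your remark about index bookkeeping is exactly the point one needs to watch.
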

\begin{proof}
Let the GBT-space $ (X,  \mu_1, \mu_2) $ be pairwise  $ T_0 $ and pairwise $ R_0 $ and $ x,  y \in X $ and $ x\not=y $. By Theorem \ref{18}, either $ x\not\in \overline{\{y\}_{\mu_1}} $ or $ y\not\in \overline{\{x\}_{\mu_2}} $. Suppose $ y\not\in \overline{\{x\}_{\mu_2}} $. Therefore $ y\in X - \overline{\{x\}_{\mu_2}}$,  a $ \mu_2 $-open set and $ x \not \in X  - \overline{\{x\}_{\mu_2}} $. Again for pairwise  $ R_0 $, $ \overline{\{y\}_{\mu_1}}\subset X  - \overline{\{x\}_{\mu_2}} $.  Therefore $ \overline{\{x\}_{\mu_2}}\cap \overline{\{y\}_{\mu_1}}=\emptyset$ which implies $ x \not\in \overline{\{y\}_{\mu_1}} $, so $ x\in X- \overline{\{y\}_{\mu_1}}$ and also $ y\not\in X- \overline{\{y\}_{\mu_1}}$, a $ \mu_1 $-open set. Thus the GBT-space is pairwise $ T_1 $.
\end{proof}

\begin{remark}\label{66} 
In a GBT-apace $ (X,  \mu_1, \mu_2) $, pairwise $ T_1 $ may not imply pairwise $ R_0 $ as it can be verified from the Example \ref{35} by taking the set $\{a\} $ as $ \mu_1 $-open.
\end{remark}

\begin{corollary} \label{67}
For a pairwise $ R_0 $ and pairwise $ \lambda $-Symmetric GBT-space $(X, \mu_1,\mu_2) $ the following are equivalent the proof of which can be done easily.

(1)  $(X, \mu_1,\mu_2) $ is pairwise $ T_0 $
   
(2)  $(X, \mu_1,\mu_2) $ is pairwise $ T_1 $
   
(3) $(X,\mu_1,\mu_2) $ is pairwise $ T_\frac{1}{2} $ 
   
(4)  $(X,\mu_1,\mu_2) $ is pairwise $ T_\frac{5}{8} $
   
(5) $(X,\mu_1,\mu_2) $ is pairwise $ T_\frac{3}{8} $ 
   
(6)  $(X,\mu_1,\mu_2) $ is pairwise $ T_\frac{1}{4} $.  
\end{corollary}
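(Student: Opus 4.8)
The plan is to establish the chain of implications $(2)\Rightarrow(3)\Rightarrow(4)\Rightarrow(5)\Rightarrow(6)$ and $(6)\Rightarrow(1)$ for free from earlier results, and then to close the loop with $(1)\Rightarrow(2)$ using the hypotheses pairwise $R_0$ and pairwise $\lambda$-symmetric. Indeed, $(2)\Rightarrow(3)$ is Theorem \ref{54} (pairwise $T_1$ together with pairwise $\lambda$-symmetric gives pairwise $T_\frac{1}{2}$); the implications $(3)\Rightarrow(4)\Rightarrow(5)\Rightarrow(6)$ are exactly the statement of Remark \ref{63}; and $(6)\Rightarrow(1)$ is Theorem \ref{57}. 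So the only genuine work is $(1)\Rightarrow(2)$, and here I would simply invoke Theorem \ref{65}: a GBT-space that is pairwise $T_0$ and pairwise $R_0$ is pairwise $T_1$. Since pairwise $R_0$ is a standing hypothesis of the corollary, this closes the cycle.

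Concretely I would write the proof as: \emph{Assume throughout that $(X,\mu_1,\mu_2)$ is pairwise $R_0$ and pairwise $\lambda$-symmetric.} First, $(2)\Rightarrow(3)$ by Theorem \ref{54}. Next, $(3)\Rightarrow(4)$, $(4)\Rightarrow(5)$, $(5)\Rightarrow(6)$ by Remark \ref{63}. Then $(6)\Rightarrow(1)$ by Theorem \ref{57}. Finally $(1)\Rightarrow(2)$ by Theorem \ref{65}, using the pairwise $R_0$ hypothesis. Hence all six conditions are equivalent. One should double-check that the three "reverse" links asserted in Remark \ref{63}—which are phrased there only as forward implications $T_\frac{1}{2}\Rightarrow T_\frac{5}{8}\Rightarrow T_\frac{3}{8}\Rightarrow T_\frac{1}{4}$—are oriented the right way to chain with $(6)\Rightarrow(1)\Rightarrow(2)\Rightarrow(3)$; they are, because the cycle only needs each implication once in a single direction.

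The main obstacle, such as it is, is bookkeeping rather than mathematics: making sure the pairwise $\lambda$-symmetric hypothesis is actually consumed (it enters only through Theorem \ref{54} in the step $(2)\Rightarrow(3)$) and that pairwise $R_0$ is consumed (only through Theorem \ref{65} in the step $(1)\Rightarrow(2)$). No other step uses either hypothesis, so it is worth remarking that without pairwise $\lambda$-symmetric one still gets $(1)\Leftrightarrow(2)$ from $R_0$, and the failure of, say, $(2)\Rightarrow(3)$ in general is exactly why the extra symmetry condition is needed—Example \ref{35} already shows a pairwise $T_1$ space that is not pairwise $T_\frac{1}{2}$. I would therefore keep the proof to three or four sentences citing the numbered results in the order above, exactly as the corollary's own remark ("the proof of which can be done easily") anticipates.
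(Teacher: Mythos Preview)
Your proposal is correct and is exactly the intended argument: the paper gives no detailed proof here (it merely says ``the proof of which can be done easily''), and the natural reading is precisely the cycle $(1)\Rightarrow(2)$ via Theorem~\ref{65}, $(2)\Rightarrow(3)$ via Theorem~\ref{54}, $(3)\Rightarrow(4)\Rightarrow(5)\Rightarrow(6)$ via Remark~\ref{63}, and $(6)\Rightarrow(1)$ via Theorem~\ref{57}. Your bookkeeping about where each hypothesis is consumed is accurate and a nice touch.
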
   
\quad

\end{document}